\numberwithin{equation}{section}
\theoremstyle{plain}
\newtheorem{theorem}[subsection]{Theorem}
\newtheorem{lemma}[subsection]{Lemma}
\newtheorem{corollary}[subsection]{Corollary}
\theoremstyle{definition}
\newtheorem{definition}[subsection]{Definition}
\theoremstyle{remark}
\newtheorem{remark}[subsection]{Remark}
\newcommand{\ind}{\operatorname{ind}}
\newcommand{\id}{\operatorname{id}}
\newcommand{\dom}{\operatorname{dom}}
\newcommand{\dm}{{\partial M}}
\newcommand{\CC}{\mathbb{C}}
\newcommand{\RR}{\mathbb{R}}
\newcommand{\cRR}{\mathscr{R}}
\newcommand{\ZZ}{\mathbb{Z}}
\newcommand{\cHH}{\check{H}}
\newcommand{\hHH}{\hat{H}}
\newcommand{\bfu}{\mathbf{u}}
\newcommand{\bfv}{\mathbf{v}}
\newcommand{\upper}{\uppercase\expandafter}
\newcommand{\n}{\nabla}
\newcommand{\p}{\partial}
\newcommand{\pM}{{\p M}}
\newcommand{\End}{\operatorname{End}}
\newcommand{\ad}{{\rm ad}}
\newcommand{\oB}{\bar{B}}
\newcommand{\coker}{\operatorname{coker}}
\newcommand{\ch}{\operatorname{ch}}
\renewcommand{\>}{\rangle}
\newcommand{\<}{\langle}
\renewcommand{\det}{\operatorname{det}}
\newcommand{\calB}{\mathscr{B}}
\newcommand{\calE}{\mathscr{E}}
\newcommand{\D}{\mathscr{D}}
\newcommand{\E}{\mathscr{E}}
\newcommand{\ID}{\operatorname{Id}}
\newcommand{\tr}{{\operatorname{tr}}}
\newcommand{\RP}{\RR\mathrm{P}}
\newcommand{\hM}{\hat{M}}
\newcommand{\hE}{\hat{E}}
\newcommand{\hD}{\hat{D}}
\newcommand{\hA}{\hat{A}}
\begin{document}
\title{A gluing formula for the $\ZZ_2$-valued index of odd symmetric operators}
\author{Maxim Braverman} 
\address{Department of Mathematics,
        Northeastern University,
        Boston, MA 02115,
        USA
         }
\email{m.braverman@northeastern.edu}
\author{Ahmad Reza Haj Saeedi Sadegh }
    \address{Department of Mathematics, Dartmouth College, Hanover, NH 03755,
USA}
\email{arhsaeedi@gmail.com}

\author{Junrong Yan}
    \address{Department of Mathematics,
        Northeastern University,
        Boston, MA 02115,
        USA}
\email{j.yan@northeastern.edu}
\begin{abstract}
We investigate Dirac-type operator $D$ on involutive manifolds with boundary with symmetry, which forces the index of $D$ to vanish. We study the secondary $\ZZ_2$-valued index of elliptic boundary value problems for such operators. We prove a $\ZZ_2$-valued analog of the splitting theorem: the $\ZZ_2$-valued index of an operator on a closed manifold $M$ equals the $\ZZ_2$-valued index of a boundary value problem on a manifold obtained by cutting $M$ along a hypersurface $N$. When $N$ divides $M$ into two disjoint submanifolds $M_1$ and $M_2$, the $\ZZ_2$-valued index on $M$ is equal to the mod 2 reduction of the usual $\ZZ$-valued index of the Atiyah-Patodi-Singer boundary value problem on $M_1$. This leads to a cohomological formula for the $\ZZ_2$-valued index. 
\end{abstract}

\subjclass[2020]{58J20, 19K56, 58J22, 58J32}
\keywords{index theory, Atiyah-Patodi-Singer, quaternionic bundle, odd symmetric operator}

\maketitle

\section{Introduction}

Atiyah and Singer, \cite{AtSinger69}, initiated the study of the index theory of real operators, which commute with an action of the Clifford algebra $Cl_k$.  The special case of operators commuting with $Cl_2$ is equivalent to the study of so-called {\em odd symmetric} operators. These are the operators on a complex Hilbert space $H$ with an anti-unitary anti-involution, i.e., an anti-linear map  $\tau$ with $\tau^{-1}= \tau^*= -\tau$. A Fredholm operator $D$ is called \textit{odd symmetric} if $\tau D \tau^{-1} = D^*$. This case attracted particular attention because of its connection to physical systems with time-reversal symmetry \cites{Schulz-Baldes15,DeNittisSB15,DollSB21,GrafPorta13,Hayashi17}. The usual index of an odd symmetric operator vanishes, but Atiyah and Singer defined a secondary $\ZZ_2$-valued index, called the $\tau$-index,  as the dimension of the kernel of $D$ modulo 2:
\begin{equation}\label{E:Itauindex}
    \ind_\tau D\ := \ \dim\ker D \qquad \text{mod}\quad 2.
\end{equation}

In \cites{BrSaeedi24deformation,BrSaeedi24spflow}, the first and the second authors extended several classical index-theoretical results to the $\tau$-index of odd symmetric differential operators on a manifold $M$ with an involution $\theta^M:M\to M$. They also gave an extension and a new interpretation of the $\ZZ_2$-valued bulk index for topological insulators of type AII, \cite{GrafPorta13}, and a generalization of bulk-edge correspondence for $\ZZ_2$-valued invariants (this should be compared with \cite{Br19Toeplitz}, where similar results were obtained for usual $\ZZ$-valued invariants).

In the present paper, we discuss boundary value problems for odd symmetric operators. We obtain a $\ZZ_2$-valued analogue of the classical splitting theorem, \cite[Theorem 8.17]{BarBallmann12}, which relates the index of $D$ with the index of the Atiyah-Patodi-Singer (APS) boundary value problem on a manifold obtained by cutting $M$ along a hypersurface. If a manifold $M$ with involution $\theta^M$ is divided into two disjoint manifolds $M_1$ and $M_2$ by a $\theta^M$-invariant hypersurface, then we show that the $\ZZ_2$-index of an odd symmetric Dirac-type operator $D$ on $M$ is equal modulo 2 to the usual $\ZZ$-valued index of the APS boundary value problem of the restriction of $D$ to $M_1$. This allows us to express $\ind_\tau D$ as an integral of a differential form over $M_1$.

Let us give more details about our results.

An {\em involutive manifold} $(M,\theta^M)$ is a manifold $M$ together with a smooth involution $\theta^M:M\to M$. A \textit{quaternionic bundle} over $M$  is a complex Hermitian vector bundle $E$ together with an anti-unitary anti-involution $\theta^E:E\to E$, which covers $\theta^M$,  \cites{Dupont69,DeNittisGomi15,Hayashi17}. We define an anti-unitary anti-involution $\tau:\Gamma(M,E)\to \Gamma(M,E)$ by 
\[
\tau:\, f(x)\ \mapsto \ \theta^E f(\theta^M x), \qquad f\in\Gamma(M,E),
\]
and consider a Dirac-type operator $D$ which is odd symmetric with respect to $\tau$. If $M$ is compact, this operator is Fredholm. If $M$ is a compact manifold with boundary we consider an elliptic boundary condition $B\subset H^{-1/2}(\p M,E_{\p M})$ for $D$ and denote by $D_B$ the restriction of $D$ to the sections of $E$, whose restrictions to $\p M$ lie in $B$. Then $D_B$ is Fredholm. If $\tau$ interchange $B$ with the {\em adjoint boundary conditions} $B^\ad$, then $D_B$ is odd symmetric, and we are interested in its $\tau$-index \eqref{E:Itauindex}.

Suppose $N\subset M$ is a $\theta^M$-invariant hypersurface such that a small neighborhood $U$ of $N$ decomposes as $U= U_1\sqcup_N U_2$ with $\theta^M(U_1)= U_2$. We cut $M$ along $N$ and obtain a new manifold $\hat{M}$, whose boundary consists of two copies of $N$, denoted by $N_1$ and $N_2$. 

Let $D$ be an odd symmetric Dirac-type operator acting on sections of a quaternionic Dirac bundle $E$ over $M$. 
Let $\hat{D}$ denote the induced Dirac-type operator on $\hat{M}$. Let $\hat{B}$ be the boundary condition for $\hat{D}$, whose restriction to $N_1$  is the APS boundary conditions and whose restriction to $N_2$ is the dual APS boundary conditions. Then $\hat{D}_{\hat{B}}$ is an odd symmetric operator. Our first result, Theorem~\ref{T:splittingtau}, is the following analog of the classical Splitting Theorem for the index
\begin{equation}\label{E:ISplitting}
    \ind_\tau D\ = \ \ind_\tau \hat{D}_{\hat{B}}.
\end{equation}
The proof is similar to the proof for the $\ZZ$-valued index but has the following difficulty. As in the classical case,  the main part of the proof is a construction of a deformation $T_s$ ($s\in[0,1]$) between $D$ and $\hat{D}_{\hat{B}}$ (these are the operators denoted by $\hD^+_{B_s}K_s$ in Section~\ref{SS:KsDs}), cf., for example, \cite[Lemma 8.11, Theorem 8.12]{BarBallmann12}). However, the operators $T_s$ are not odd symmetric in general. In Lemma~\ref{L:kerKsDs}, we show that despite this, the dimension of the kernel of these operators is independent of $s$ modulo 2, which is enough to prove the Splitting Theorem. 

In case when $N$ separates $M$ into two disjoint components $M=M_1\sqcup_N M_2$ our assumptions imply that $\theta^M(M_1)= M_2$. In this case, the index of $\hat{D}_{\hat{B}}$ is equal to the sum of the indices of boundary value problems on $M_1$ and on $M_2$. One checks that the involution $\tau$ sends the kernel of the boundary value problem on $M_2$ to the cokernel of the boundary value problem on $M_1$. This implies, cf. Theorem~\ref{T:indtau=indA1}, that in this case 
\begin{equation}\label{E:Iindtau=indA1}
        \ind_\tau D^+\ \equiv \ \ind (D_1)^+_{APS} \ \equiv \ \ind (D_2)^+_{APS},
\end{equation}
where $\equiv$ stands for equality modulo 2,  $D_j$ is the restriction of $D$ to $M_j$ ($j=1,2$), and  $\ind (D_j)^+_{APS}$ is the usual $\ZZ$-valued index of the Atiyah-Patodi-Singer boundary value problem for $D_j$.  

We can now apply the usual Atiyah-Patodi-Singer index theorem to compute the $\ZZ_2$-valued index of $D$. Suppose all the structures are products near $N\subset M$ and let $A$ denote the restriction of $D$ to $N$. Then $\tau A\tau^{-1}= -A$. It follows that the eta-invariant $\eta(A)$ vanishes and \eqref{E:Iindtau=indA1} and the standard Atiyah-Patodi-Singer index theorem implies 
\begin{equation}
            \ind_\tau D^+\ \equiv \ 
     \int_{M_j}\, \hat{A}(TM)\,\ch(E/S) \ - \frac{\dim\ker A^+}2, \qquad j=1,2.
\end{equation}
By Kramer's degeneracy,  cf. \cite{KleinMartin1952}, the dimension of $\ker A^+$ is even. We conclude that in this situation
\begin{equation}\label{E:Iintegerintegral}
    \int_{M_j}\, \hat{A}(TM)\,\ch(E/S)\ \in \ \ZZ, 
    \qquad j=1,2.
\end{equation}

\section{Boundary value problems for Dirac-type operators}\label{S:index}

In this section, we recall the notion of an elliptic boundary value problem for a Dirac-type operator. The Atiyah-Patody-Singer, the dual Atiyah-Patodi-Singer, and the transmission boundary conditions are elliptic. Our notation and terminology are mostly borrowed from \cite{BarBallmann12}.

\subsection{A Dirac-type operator}\label{SS:Diractype}
Let $M$ be a compact oriented Riemannian manifold (possibly with boundary) and let $E=E^+\oplus E^-$ be a Dirac bundle over $M$, cf. \cite[Definition~II.5.2]{LawMic89}. In particular, $E$ is a Hermitian vector bundle endowed with a Clifford multiplication $c:T^*M\to \End(E)$ and a compatible Hermitian connection $\n^E$.  

A {\em Dirac-type operator} on $E$ is the differential operator

\begin{equation}\label{E:Dirac}
     D := \ \sum_{1\le j\le n}\, c(e_i)\n^E_{e_j}+V:\,C^\infty(M,E)\to C^\infty(M,E), 
\end{equation}
where $(e_1,\ldots,e_n)$ is an orthonormal frame of $TM$ and $V:E\to E$ is a self-adjoint bundle map. Then $D$ is a formally self-adjoint differential operator, which is independent of the choice of the frame $(e_1,\ldots,e_n)$, cf. \cite[\S3.3]{BeGeVe}.

If $E=E^+\oplus E^-$ is a graded bundle, $E^+$ is orthogonal to $E^-$, the connection preserves the grading, and $c(\xi), V:E^\pm \to E^\mp$ for all $\xi\in T^*M$, we say that $E$ is a graded Dirac bundle. In this case, $D$ takes the form 
\begin{equation}\label{E:AAAgrading}
	D\ = \ \begin{pmatrix}
	0&D^-\\
	D^+&0
	\end{pmatrix},
\end{equation}
where $D^\pm$ is the restriction of $D$ to $C^\infty(M,E^\pm)$.

We denote by $E_\dm= E_\dm^+\oplus E_\dm^-$ the restriction of $E$ to the boundary. 

\subsection{A boundary-defining function}\label{SS:boundary defining}
If the boundary $\p M$ of $M$ is non-empty, we fix a function $t:M\to [0,\infty)$ such that $t|_{\p M}\equiv 0$ and $dt$ does not vanish on $\p M$. Such a function is called a {\em boundary-defining function}. 

Given a boundary-defining function $t$, we identify a neighborhood $U\subset M$ of the boundary with a cylinder  
\begin{equation}\label{E:Zr}
	Z_r\ := \ [0,r)\times\dm. 
\end{equation}
in such a way that $t$ becomes a projection onto the first factor. Note that, in general, this identification is not an isometry. 

We use the Riemannian metric on $M$ to identify the cotangent bundle to the boundary with a subset of $T^*M$.

\subsection{An adapted operator on the boundary}\label{SS:adapted operator}
Any Dirac-type operator satisfies the conditions of Lemma~4.1 of \cite{BarBallmann12}. Hence, by this lemma,  there exists a self-adjoint first-order elliptic differential operator $A:C^\infty(\dm,E_\dm)\to C^\infty(\dm,E_\dm)$ such that the restriction of $D$ to the neighborhood $U$, defined in the previous subsection, is given by 
\begin{equation}\label{E:adapted}
    D\ = \ c(dt)\,\left(\, \frac{\p}{\p t}+A+R_t\,\right),
\end{equation}
where $R_t$ is a differential operator on $\dm$ of order at most one, depends smoothly on $t$, and such that $R_0$ is an operator of order zero. In this situation, we refer to $A$ as an {\em adapted operator of $D$} at the boundary. Note that the notion of an adapted operator depends on the choice of the boundary-defining function $t$. One readily sees that the adapted operator is a Dirac-type operator on $E_\dm$. Note, however, that it behaves differently with respect to the grading on $E_\dm$, namely $A:E_\dm^\pm\to E_\dm^\pm$, i.e., with respect to the grading on $E_\dm$ it has the form
\[
	A= \begin{pmatrix}
	    A^+&0\\0&A^-
	\end{pmatrix}:\,C^\infty(\dm,E_{\dm})\;\to\; C^\infty(\dm,E_{\dm})
\]

\begin{remark}\label{E:adapted choice}
The adapted operator $A$ is not unique. Different choices of $A$ lead to various boundary value problems. See the discussion of which choice is convenient for which operator in Sections~7 and 8 of \cite{BrMaschler19}. 
\end{remark}

\subsection{The product case}\label{SS:product}
We say that the manifold $M$ and the Dirac bundle $E$ are {\em product near the boundary} if the identification between the neighborhood $U\subset M$ of the boundary and the cylinder \eqref{E:Zr} is an isometry and the Clifford multiplication $c:T^*M\to{\rm End}(E)$ and the connection $\nabla^E$ have product structure on $Z_r$. Then the restriction \eqref{E:adapted} of $D$ to $Z_r$ takes the form 
\begin{equation}\label{E:productD}
	D\; = \;c(dt)\left(\frac{\p}{\p t}+A\right).
\end{equation}
Since both $D$ and $A$ are formally self-adjoint operators and $c(dt)$ is skew-adjoint, one sees that $A$ anti-commutes with $c(dt)$:
\begin{equation}\label{E:[A,c]}
	c(dt)\circ A\ = \ -\,A\circ c(dt).
\end{equation}
If \eqref{E:productD} holds, we say that $D$ is {\em a product near the boundary}.

\subsection{Sobolev spaces on the boundary}\label{SS:Sob&specproj}
The {\em Sobolev space}  $H^s(\dm,E_{\dm})$ is defined to be the completion of $C^\infty(\dm,E_{\dm})$ with respect the norm
\begin{equation}\label{E:Sobnorm}
	\|\bfu\|_{H^s(\dm,E_\pM)}^2\;:=\;
		\big\|(\id+A^2)^{s/2}\bfu\big\|_{L^2(\dm,E_\pM)}^2.
\end{equation}
Though the norm depends on the operator $A$, the Sobolev spaces $H^s(\dm,E_{\dm})$ are independent of $A$, \cite[\S I.7]{ShubinPDObook}.

\subsection{The hybrid Soblev spaces}\label{SS:hybrid}
The eigensections of $A$ belong to $H^s(\pM,E_\pM)$ for all $s\in \RR$. For $I\subset \RR$ we denote by 
\[
	H_I^s(A)\ \subset \  H^s(\dm,E_{\dm})
\] 
the span of the eigensections of $A$ whose eigenvalues belong to $I$ and by $P_I^A:H^s(\dm,E_{\dm})\to H_I^s(A)$ the orthogonal projection. We set $H_I^s(A^\pm):= H_I^s(A)\cap H^s(\pM,E_\pM^\pm)$.

\begin{definition}\label{D:hybrid space}
Fix $a\in\RR$ and define the \emph{hybrid} Sobolev space
\begin{equation}\label{E:checkH}
	\cHH(A)\;:=\;
	H_{(-\infty,a]}^{1/2}(A)\;\oplus\;H_{(a,\infty)}^{-1/2}(A)
	\ \subset \ H^{-1/2}(\dm,E_{\dm})
\end{equation}
with $\cHH$-norm
\[
	\|\bfu\|_{\cHH(A)}^2\;:=\;
	\big\|P_{(-\infty,a]}^A\bfu\big\|_{H^{1/2}(\dm,E_\pM)}^2\;+\;
	\big\|P_{(a,\infty)}^A\bfu\big\|_{H^{-1/2}(\dm,E_\pM)}^2.
\]

We set $\cHH(A^\pm)$ the restriction of $\cHH(A)$ to $E^\pm_\dm$.
\end{definition}
The space $\cHH(A)$ is independent of the choice of $a$, \cite[p.~5]{BarBallmann12}. By Theorem~6.7 of \cite{BarBallmann12} the hybrid space
$\cHH(A)$ coincides with the space of restrictions to the boundary of sections of $E$ which lie in the maximal domain of $D$.

Similarly, we define
\begin{equation}\label{E:hatH}
	\hHH(A)\; := \;
	H_{(-\infty,a]}^{-1/2}(A)\;\oplus\;H_{(a,\infty)}^{1/2}(A)
\end{equation}
with $\hHH$-norm
\[
\|\bfu\|_{\hHH(A)}^2\;:=\;
\|P_{(-\infty,a]}^A\bfu\|_{H^{-1/2}(\dm,E_\pM)}^2\;+\;
\|P_{(a,\infty)}^A\bfu\|_{H^{1/2}(\dm,E_\pM)}^2,
\]
and the restrictions $\hHH(A^\pm)$ of this space to $E^\pm_\dm$. 
Then 
\[
	\hHH(A)\;=\;\cHH(-A).
\]

The $L_2$-scalar product on sections of $E$ extends naturally to a perfect pairing
\[
\cHH(A)\;\times\;\hHH(A)\;\to\;\CC.
\]

By \eqref{E:[A,c]}, $c(dt)$ induces an isometry between Sobolev spaces  $H_{(-\infty,a]}^s(A^+)$  and $H_{[-a,\infty)}^s(A^-)$. Therefore, we conclude that

\begin{lemma}\label{L:admap}
Over $\dm$, the isomorphism $c(dt):E_{\dm}\to E_{\dm}$ induces an isomorphism $\hHH(A^+)\to\cHH(A^-)$. In particular, the sesquilinear form
\[
	\beta:\cHH(A^+)\times\cHH(A^-) \ \to \ \CC,
	\qquad\	\beta(\bfu,\bfv) \ :=\ -\,\big(\bfu,-c(dt)\bfv\big)
	\ =\ -\,\big(c(dt)\bfu,\bfv\big),
\]
is a perfect pairing of topological vector spaces.
\end{lemma}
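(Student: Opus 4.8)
The plan is to reduce everything to the two structural facts already assembled: the anticommutation relation \eqref{E:[A,c]} and the fact that $c(dt)$ is a (pointwise) unitary on $E_\dm$ interchanging $E_\dm^+$ and $E_\dm^-$. First I would spell out the action of $c(dt)$ on the spectral decomposition. Since $c(dt)$ anticommutes with $A$, it maps an eigensection of $A^+$ with eigenvalue $\lambda$ to an eigensection of $A^-$ with eigenvalue $-\lambda$; because $c(dt)$ is an isometry of $L^2(\dm,E_\dm)$ and commutes with $A^2$, it carries $H^s_I(A^+)$ isometrically onto $H^s_{-I}(A^-)$ for every $s\in\RR$ and every $I\subset\RR$. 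Applying this with $I=(-\infty,a]$ and $I=(a,\infty)$ to the two summands of $\hHH(A^+)=H^{-1/2}_{(-\infty,a]}(A^+)\oplus H^{1/2}_{(a,\infty)}(A^+)$ gives an isometric isomorphism onto $H^{-1/2}_{[-a,\infty)}(A^-)\oplus H^{1/2}_{(-\infty,-a)}(A^-)$, which is exactly $\cHH(A^-)$ computed with the parameter $-a$; since $\cHH(A^-)$ is independent of the choice of that parameter (cited above from \cite[p.~5]{BarBallmann12}), this is the asserted isomorphism $c(dt):\hHH(A^+)\xrightarrow{\ \sim\ }\cHH(A^-)$, and it is a topological isomorphism because it is isometric for the hybrid norms.

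Next I would deduce perfectness of $\beta$ from perfectness of the $\cHH(A)\times\hHH(A)\to\CC$ pairing recalled just before the statement. Decompose that pairing along the grading $E_\dm=E_\dm^+\oplus E_\dm^-$: since $A$ preserves the grading, so do the hybrid spaces, and the $L^2$-pairing restricts to two independent perfect pairings $\cHH(A^\pm)\times\hHH(A^\pm)\to\CC$. Now for $\bfu\in\cHH(A^+)$ and $\bfv\in\cHH(A^-)$ we have $-c(dt)\bfv\in\hHH(A^+)$ by the isomorphism of the previous paragraph (run backwards, i.e.\ using $c(dt):\cHH(A^-)\to\hHH(A^+)$, which is the inverse up to sign of the map above since $c(dt)^2=-\id$), so the formula $\beta(\bfu,\bfv)=-\big(\bfu,-c(dt)\bfv\big)$ makes sense and identifies $\beta$ with the perfect pairing $\cHH(A^+)\times\hHH(A^+)\to\CC$ precomposed in the second slot with a topological isomorphism. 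A perfect pairing composed with an isomorphism on one factor is again perfect, so $\beta$ is perfect; the two displayed expressions for $\beta(\bfu,\bfv)$ agree because $c(dt)$ is skew-adjoint, $\big(\bfu,-c(dt)\bfv\big)=\big(c(dt)\bfu,\bfv\big)$.

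The only genuinely delicate point is bookkeeping of the spectral parameter: the image $c(dt)\hHH(A^+)$ is naturally the hybrid space for $A^-$ built with cutoff at $-a$ rather than at $a$, and one must invoke the parameter-independence of $\cHH$ (and, if $a$ or $-a$ is itself an eigenvalue, take care that the weak/strong splitting of the $\lambda=-a$ eigenspace matches up — but the inequalities $(-\infty,a]$ versus $(-\infty,-a)$ in the two descriptions are exactly reconciled by $c(dt)$ sending the $\lambda=a$ eigenspace of $A^+$, which sits in the strong $H^{1/2}$ part of $\hHH(A^+)$, to the $\lambda=-a$ eigenspace of $A^-$, which sits in the strong $H^{1/2}$ part of $\cHH(A^-)$ — so the closed/open endpoint convention is consistent). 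Everything else is a routine consequence of $c(dt)$ being a grading-reversing $L^2$-isometry that anticommutes with $A$.
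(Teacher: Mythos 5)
Your argument is correct and follows essentially the same route as the paper's, where the lemma is dispatched in one line by the same spectral‑mapping observation (that $c(dt)$ anticommutes with $A$ and therefore carries $H^s_{I}(A^+)$ isometrically onto $H^s_{-I}(A^-)$) that you spell out in detail; the reduction of perfectness of $\beta$ to the known $\cHH(A)\times\hHH(A)$ pairing via composition with an isomorphism is also the intended step.

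One small bookkeeping slip in your final parenthetical: you write that the $\lambda=a$ eigenspace of $A^+$ sits in the ``strong $H^{1/2}$'' summand of $\hHH(A^+)$, but by definition $\hHH(A^+)=H^{-1/2}_{(-\infty,a]}(A^+)\oplus H^{1/2}_{(a,\infty)}(A^+)$, so that eigenspace sits in the \emph{weak} $H^{-1/2}$ summand; hence the open/closed endpoint conventions are not literally matched by $c(dt)$ in the way you describe. This does not damage the proof, because the parameter‑independence of the hybrid space (which you already invoke) absorbs any finite‑dimensional discrepancy at the cutoff, but the remark as written is misleading and could simply be dropped in favor of choosing the cutoff $b$ slightly below $-a$ and away from $\operatorname{spec}(A^-)$.
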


\subsection{Boundary value problems}\label{SS:boundary value}
It is well known, cf., for example, Theorem~5.22 of \cite{Adams-SobolevSpaces}, that the natural restriction map $\cRR:C^\infty(M,E)\to C^\infty(\dm,E_{\dm})$ extends to a bounded map $\cRR:H^s(M,E)\to H^{s-1/2}(\dm,E_{\dm})$. Let $\dom D_{\max}$ denote the maximal domain of $D$. Theorem~1.7 of \cite{BarBallmann12} states that $\cRR: \dom D_{\max}\to \cHH(A)$. In other words, $\cHH(A)$ can be identified with boundary values of sections in the maximal domain of $D$.

\begin{definition}\label{D:bc}
A closed subspace $B\subset \cHH(A^\pm)$ is called a \emph{boundary condition} for $D^\pm$. We will use the notation $D^+_{B}$ for the operator $D^+$ with the following domains
\[
	\dom(D^\pm_{B})\;=\;\{u\in\dom D^\pm_{\max}:\cRR u\in B\}.
\]
\end{definition}
Thus, $D^\pm_B$ is the operator
\[
    D^\pm_B:\, \dom(D^\pm_{B})\ \to \ L^2(M,E^\mp).
\]
\begin{remark}
In \cite{BarBallmann12}, the operator $D^\pm_B$ is denoted by $D^\pm_{B,\max}$. 
\end{remark}

The space
\begin{equation}\label{E:adbc}
	B^{\rm ad}\;:=\;
	\big\{\,\bfv\in\cHH(A^\mp):\,
		\big(c(dt)\bfu,\bfv\big)=0\mbox{ for all }\bfu\in B\,\big\}
\end{equation}
is called the adjoint boundary condition to $B$. It is a closed subspace of $\cHH(A^\mp)$. By equation (62) of \cite{BarBallmann12}, the operator $D^\mp_{B^{\rm ad}}$ is the adjoint of the operator $D^\pm_B$:
\begin{equation}\label{E:Bad}
    \big(D_B^\pm\big)^*\ = \ D^\mp_{B^{\rm ad}},
\end{equation}
where $D^\mp_{B^{\rm ad}}$ is the operator 
\[
    D^\mp:\, \dom(D^\mp_{B^{\rm ad}})\ \to \ L^2(M,E^\pm).
\]

\begin{definition}\label{D:ellbc}
A boundary condition $B$ is said to be \emph{elliptic} if $B\subset H^{1/2}(A^\pm)$ and $B^{\rm ad}\subset H^{1/2}(A^\mp)$.
\end{definition}

By Corollary~8.6 of \cite{BarBallmann12}, if $B$ is an elliptic boundary condition then  the operators $D^\pm_B$ and $D^\mp_{B^{\rm ad}}$ are Fredholm. We define the index of $D^+_B$ by 
\begin{equation}\label{E:defofind}
    \ind D^\pm_B\ := \ \dim \ker D^\pm_B\ - \ \dim \coker D^\pm_B\ = \ 
    \dim \ker D^\pm_B\ - \ \dim \ker D^\mp_{B^{\rm ad}}.
\end{equation}

\subsection{Atiyah-Patodi-Singer boundary conditions}\label{SS:gAPS}
The boundary conditions
\[
    B(A^\pm) \ := \ H^{1/2}_{(-\infty,0)}(\dm,E_\dm^\pm)
\]
are called the \emph{the Atiyah--Patodi--Singer (APS) boundary conditions} for $D^\pm$. We set
\begin{equation}\label{E:APSbc}
        D^\pm_{APS}\ := \ D^\pm_{B(A^\pm)}.
\end{equation}
We define the {\em dual APS boundary conditions}
\[
    \oB(A^\pm)\ := \ H^{1/2}_{(-\infty,0]}(\dm,E_\dm^\pm). 
\]
One readily checks that $B(A^\pm)$ and $\oB(A^\pm)$
are elliptic boundary conditions (cf. \cite[Example~1.16]{BarBallmann12}) and that 
\begin{equation}\label{E:APS-dAPS}
    \ind D^\pm_{\oB(A^\pm)}\ = \ \ind D^{\pm}_{APS}\ + \ \dim\ker A^\pm,
\end{equation}
(cf. \cite[Corollary~8.8]{BarBallmann12}).

From \eqref{E:adbc} we get
\begin{equation}\label{E:adjointAPS}
    \big(B(A^\pm)\big)^{\ad}\ = \ \oB(A^\mp).
\end{equation}
Therefore, by \eqref{E:defofind}, we now obtain
\begin{equation}\label{E:indexAPS}
  \ind D^\pm_{APS}\ = \ \dim\ker D^\pm_{APS} \ - \ \dim\ker D^\mp_{\oB(A^\mp)}.
\end{equation}

\subsection{The Atiyah-Patodi-Singer index theorem}\label{SS:APStheorem}
Suppose that $D$ is a product near the boundary, cf. Section~\ref{SS:product}. Let $R$ denote the Riemannian curvature of the metric $g^M$ and let 
\[
    \hat{A}(TM)\ := \det{}^{1/2}\left( \frac{R/2}{\sinh(R/2)}\right)
\]
be the $\hat{A}$-genus differential form. Let $\ch(E/S)$ denote the relative Chern character form of $E$, cf. \cite[Page~146]{BeGeVe}. Let $\eta(A)$ denote the eta-invariant of the operator $A$. The celebrated Atiyah-Patodi-Singer theorem, \cite{APS1} (see also \cite[Theorem~22.18]{BoosWoj93book}), states that 
\begin{equation}\label{E:APStheorem}
     \ind D^+_{APS}\ = \  \int_M\, \hat{A}(TM)\,\ch(E/S) \ - \ 
     \frac{\eta(A)+\dim\ker A}{2}.
\end{equation}
\begin{remark}
Grubb \cite{Grubb92} suggested extending the Atiyah-Patodi-Singer construction to the case where $D$ is not a product near the boundary. She approached it by deforming all the data near the boundary to those with a product structure.  As a result, the contribution of the boundary to the index consists of two terms -- the $\eta$-invariant and an integral of a certain differential form defined by $D$ over the boundary. Gilkey \cites{Gikey95book,Gilkey93,Gilkey75} used invariance theory to compute this integral for different geometric Dirac operators, including the twisted spin-Dirac and the twisted signature operator. A slightly more unified approach to getting an index theorem for general Dirac-type operators in a non-product case was suggested in \cite{BrMaschler19}. We don't discuss these results in this paper. But an interested reader might easily extend our index theorem \ref{T:tauindex} to the non-product case using these ideas. 
\end{remark}

\subsection{The splitting theorem}\label{SS:splittingthm}

Let $M$ be a compact manifold without boundary. Let $N\subset M$ be a hypersurface. Then a neighborhood $U$ of $N$ can be identified with $(-r,r)\times N$. As in Section~\ref{SS:boundary defining} we denote by $t$ the coordinate along the axis of $(-r,r)$. 

Cutting $M$ along $N$ we obtain a manifold $\hM$ (connected or not) with two copies of $N$ as the boundary. So we can write $\hM=(M\setminus N)\sqcup N_1\sqcup N_2$, where a neighborhood of $N_1$ is naturally identified with $[0,r)\times N_1$ and a neighborhood of $N_2$ is naturally identified with $(-r,0]\times N_2$. We set $t_1:=t$ in the neighborhood of $N_1$ and $t_2:= -t$ in the neighborhood of $N_2$. 

The bundle $E=E^+\oplus E^-\to M$ and the operators  $D^\pm$ induce a graded Dirac bundle $\hE=\hE^+\oplus\hE^-\to \hM$ and Dirac-type operators
\[
	\hD^\pm:C^\infty(\hM,\hE^\pm)\ \to \ C^\infty(\hM,\hE^\mp)
\]
on $\hM$.  

As in Section~\ref{SS:adapted operator} we can choose an {\em adapted operator $A:C^\infty(N,E_N)\to C^\infty(N,E_N)$ for the hypersurface $N$}, such that on $U\simeq (-r,r)\times N$
\begin{equation}\label{E:adaptedN}
    D\ = \ c(dt)\,\left(\, \frac{\p}{\p t}+A+R_t\,\right),
\end{equation}
where $R_t$ is a differential operator on $N$ of order at most one, depending smoothly on $t$ and such that $R_0$ is an operator of order zero.
Then $A_1=A$ is an adapted operator for $N_1$ and $A_2=-A$ is an adapted operator for $N_2$ and in neighborhoods of $N_j$ in $\hat{M}$, we have 
\begin{equation}\label{E:productD12}
	D\; = \;c(dt_j)\,\left(\frac{\p}{\p t_j}+A_j+R_{t,j}\,\right),\qquad\text{where}\quad A_1=A, \  A_2=-A, \ R_{t,1}=R_t,\ R_{t,2}=-R_{-t}.
\end{equation}
Thus $\hA=A\oplus(-A)$ is an adapted operator of $\hD$ to $\p \hM$. Hence, 
\begin{equation}\label{E:B(hA)}
       B(\hA)\ = \ B(A)\oplus B(-A). 
\end{equation}

The following {\em Splitting Theorem} is proven in \cite[Theorem 8.17]{BarBallmann12}:

\begin{theorem}\label{T:splitting}
Suppose $M, D^+,\hM,( \hD)^+$ are as above. Let $B_1=H_{(-\infty,0)}^{1/2}(A)$ and $B_2= H_{(-\infty,0]}^{1/2}(-A)= H_{[0,\infty)}^{1/2}(A)$ be the APS and the dual APS boundary conditions for $( \hD)^+$ along $N_1$ and $N_2$, respectively. Then $( \hD)^+_{B_1\oplus B_2}$ is a Fredholm operator and
\begin{equation}\label{E:splitting}
	\ind D^+\;=\;\ind( \hD)^+_{B_1\oplus B_2}.
\end{equation}
\end{theorem}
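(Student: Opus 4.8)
The plan is to reduce the Splitting Theorem to a statement about kernels by using the deformation argument that is standard for the $\ZZ$-valued index and is spelled out in detail in Bär--Ballmann \cite[\S8]{BarBallmann12}. First I would set up a family of manifolds $M_s$ obtained from $\hM$ by gluing back a cylinder $[-s,s]\times N$ (or, dually, stretching the neck), interpolating between $\hM$ at $s=0$ and $M$ (with an infinite cylinder in the limit, or an appropriately identified finite one) as $s$ grows; more precisely, following the reference, I would instead keep the underlying manifold fixed and deform the boundary condition, producing a continuous family $(\hD)^+_{B_s}$ of Fredholm operators with $B_0 = B_1\oplus B_2$ the APS/dual-APS condition and $B_1$ the transmission (matching) boundary condition, which by Bär--Ballmann reproduces $\ind D^+$ since sections in $\dom D^+_{\max}$ on the closed manifold $M$ correspond exactly to pairs of sections on the two sides of $N$ agreeing along $N$. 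The transmission condition is elliptic (this is recalled in the surrounding text / \cite{BarBallmann12}), so each $(\hD)^+_{B_s}$ is Fredholm.

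Next I would invoke the homotopy invariance of the Fredholm index: since $s\mapsto (\hD)^+_{B_s}$ is a norm-continuous (or gap-continuous, in the sense of \cite[\S8]{BarBallmann12}) family of Fredholm operators, $\ind (\hD)^+_{B_s}$ is constant in $s$. Evaluating at the two endpoints gives $\ind D^+ = \ind (\hD)^+_{B_1\oplus B_2}$. The only genuinely analytic input is that the family is continuous in the appropriate operator topology and that ellipticity (hence the Fredholm property and the uniform bound needed for continuity) is preserved along the deformation; both are provided by the elliptic boundary value theory of \cite{BarBallmann12}, in particular Corollary~8.6 for the Fredholm property and the explicit neck-stretching computation of \cite[Lemma~8.11, Theorem~8.12, Theorem~8.17]{BarBallmann12}.

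The main obstacle — and the reason this is nontrivial — is the construction and continuity of the interpolating family $B_s$: one must check that at no intermediate value does ellipticity degenerate, i.e.\ that $B_s$ and $B_s^{\ad}$ stay inside $H^{1/2}$ and vary continuously in the relevant Grassmannian, and one must verify that the endpoint $s=1$ genuinely recovers the operator $D^+$ on the closed manifold $M$ and not merely a Fredholm-equivalent one. Since the statement here is verbatim the $\ZZ$-valued theorem, I would simply cite \cite[Theorem~8.17]{BarBallmann12} for the full argument; the substance of the present paper will be the $\tau$-equivariant refinement (Theorem~\ref{T:splittingtau}), where the analogous deformation $T_s$ fails to be odd symmetric and one must instead control $\dim\ker T_s \bmod 2$, as flagged in the introduction and carried out in Lemma~\ref{L:kerKsDs}.
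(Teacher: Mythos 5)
Your proposal matches the paper exactly: the paper states Theorem~\ref{T:splitting} with the line ``The following Splitting Theorem is proven in \cite[Theorem 8.17]{BarBallmann12}'' and provides no independent proof, so citing that reference (with the deformation-of-boundary-conditions sketch you outline) is precisely what the authors do. One small notational caution: you reuse $B_0, B_1$ as endpoints of the interpolating family while the theorem statement already reserves $B_1, B_2$ for the APS and dual-APS conditions on $N_1$ and $N_2$ (and the paper's own Section~4 takes $B_0 = B_{\tr}$, $B_1 = \hat B(\hA)$, the reverse of your convention), so you may want to rename the parameter to avoid the clash.
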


Using \eqref{E:B(hA)} and \eqref{E:APS-dAPS}, we can rewrite \eqref{E:splitting} as 
\begin{equation}\label{E:splitting2}
	\ind D^+\;=\;\ind( \hD)^+_{APS}\ + \ \dim\ker A^+.
\end{equation}

\section{The $\tau$-index}\label{S:tauindex}

In this section, we recall the construction of the secondary $\ZZ_2$-valued index of odd symmetric operators studied in \cites{Schulz-Baldes15,DeNittisSB15,DollSB21} for bounded operators and in \cite{BrSaeedi24index} for differential operators on a non-compact involutive manifold. 

The exposition in this section follows closely Section~2 of \cite{BrSaeedi24spflow}.

\subsection{An anti-linear involution}\label{SS:involtiontau}
Let $H$ be a complex Hilbert space and let $\<\cdot,\cdot\>_H$ denote the scalar product on $H$. For an \underline{anti-linear} map $\tau:H\to H$ we denote by $\tau^*$ the unique anti–linear operator satisfying 
\[
	\<\tau x, y\>_H\ = \ \overline{\<x,\tau^*y\>_H} \qquad\text{for all}\quad
	x,y\in H.
\]

\begin{definition}\label{D:antiinvolution}
An anti-linear map  $\tau:H\to H$  is called an {\em anti-involution} if  $\tau^*=-\tau= \tau^{-1}$.
\end{definition}

\subsection{Odd symmetric operators}\label{SS:oddsymmetric}
Let $D: H\to H$ be a closed unbounded linear operator on $H$ whose domain is $W$, which is dense in $H$. We view $W$ as a Hilbert space with the scalar product 
\[
	\<x,y\>_W\ = \ \<x,y\>_H\ + \ \<Dx,Dy\>_H.
\]
Then $D:W\to H$ is a bounded operator. 

\begin{definition}\label{D:odd symmetric operator}
A closed operator $D:H\to H$ with a dense domain $W$ is called {\em odd symmetric} (or $\tau$-symmetric) if 
\begin{equation}\label{E:tauDtau}
    \tau\, D\, \tau^{-1}\ = \ D^*,
\end{equation}
where $D^*$ is the adjoint of the unbounded operator $D$. Equation \eqref{E:tauDtau} means, in particular, that $\tau(W)$ is equal to the domain of the operator $D^*$.

We denote by $\calB_\tau(W,H)$ the space of $\tau$-symmetric operators on $H$ whose domain is $W$. Each $T\in \calB_\tau(W,H)$ is a bounded operator from the Hilbert space $W$ to the Hilbert space $H$. The space $\calB_\tau(W,H)$ with the operator norm of these operators is a Banach space.
\end{definition}

\subsection{Graded odd symmetric operators}\label{SS:graded oddsymmetric}
Let $H=H^+\oplus H^-$ be a graded Hilbert space and let 
\[
    D \ = \
    \begin{pmatrix}0&D^-\\D^+&0\end{pmatrix}:
    \, H\to H
\]
be a closed operator with dense domain $W= W^+\oplus W^-$. Then $D:H^\pm\to H^\mp$. If  $D$ is self-adjoint, then $(D^\pm)^*= D^\mp$.

Assume that the anti-unitary anti-involution $\tau$ satisfies  $\tau:H^\pm\to H^\mp$. If $D$ is odd symmetric, then \eqref{E:tauDtau} implies that $\tau D^\pm\tau^{-1}= (D^\pm)^*$. In this situation, we say that $D$ is a \textit{graded odd symmetric operator}.

We denote the space of graded odd symmetric self-adjoint operators with domain $W$ by $\widehat{\calB}_\tau(W,H)$.
If $D\in \widehat{\calB}_\tau(W,H)$, then $\tau D^\pm \tau^{-1}= D^\mp= (D^\pm)^*$. Then \textit{the operators $D^\pm$ are odd symmetric}. 

\subsection{The $\ZZ_2$-valued index}\label{SS:Z2index}
If $D$ is an odd symmetric operator, then $\ind D=0$ by \eqref{E:tauDtau}. We define the {\em secondary} $\ZZ_2$-valued invariant – the \textit{$\tau$-index of $D$}:

\begin{definition}\label{D:tau index}
If $D$ is an odd symmetric operator on $H$ its {\em $\tau$-index} is
\begin{equation}\label{E:tauindex}
	\ind_\tau D\ := \ \dim \ker D \qquad \mod 2.
\end{equation}
If $D\in \widehat{\calB}_\tau(W,H)$ is a graded odd symmetric self-adjoint operator on a graded Hilbert space $H$, we define 
\begin{equation}\label{E:tauindex graded}
	\ind_\tau D^+\ := \ \dim \ker D^+ \qquad \mod 2.
\end{equation}
\end{definition}

Theorem~2.5 of \cite{BrSaeedi24index} states that the $\tau$-index is constant on connected components of the space of odd symmetric Fredholm operators:
\begin{theorem}\label{T:homotopyindex}
The $\tau$-index $\ind_\tau D$ is constant on the connected components of $\calB_\tau(W,H)$. The $\tau$-index  $\ind_\tau D^+$ is constant on the connected components of $\widehat{\calB}_\tau(W,H)$.
\end{theorem}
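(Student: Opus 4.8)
The plan is to prove Theorem~\ref{T:homotopyindex} by reducing the $\ZZ_2$-valued statement to the stability of the \emph{parity} of $\dim\ker D$ under continuous deformation, exploiting the fact that for an odd symmetric Fredholm operator the kernel always carries an anti-linear structure $\tau$ restricting to it as an anti-involution, and then invoking Kramers' degeneracy to control the jumps.

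\textbf{Step 1: Setting up the continuity of the path.} Let $D_s$, $s\in[0,1]$, be a continuous path in $\calB_\tau(W,H)$, i.e. a norm-continuous path of bounded operators $W\to H$ that are all $\tau$-symmetric and Fredholm. Since the Fredholm property and the index are locally constant in the operator-norm topology on $\mathcal{B}(W,H)$, the (ordinary) index of $D_s$ is constant along the path, hence $=0$ throughout by \eqref{E:tauDtau}. In particular $\dim\ker D_s = \dim\coker D_s$ for all $s$. What can jump is the common value $d(s):=\dim\ker D_s$; I want to show $d(s)\bmod 2$ is locally constant, hence constant on $[0,1]$ by connectedness.

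\textbf{Step 2: Local normal form and the parity count.} Fix $s_0\in[0,1]$. By the standard Fredholm deformation argument (Kato's construction, or the Fitting/Grushin reduction), there is a neighbourhood $(s_0-\epsilon,s_0+\epsilon)$ and finite-dimensional reductions: choosing $V^+\subset W$ a complement of $\ker D_{s_0}$ and $V^-\subset H$ a complement of $\IM D_{s_0}$, the family $D_s$ restricted appropriately is invertible, so $\ker D_s$ is isomorphic, for $s$ near $s_0$, to the kernel of an analytically varying finite-dimensional family $F(s)\in\End(\ker D_{s_0})$ with $F(s_0)=0$, where moreover $\dim\coker D_s\cong\ker F(s)^*$ and — because $D_s$ is $\tau$-symmetric — the reduction can be arranged so that $\tau$ intertwines $F(s)$ with $F(s)^*$ on $\ker D_{s_0}$. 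Concretely, transport $\tau$ through the reduction: since $\tau D_s\tau^{-1}=D_s^*$, the induced anti-linear map on the reduced space (call it $\sigma$) satisfies $\sigma F(s)\sigma^{-1}=F(s)^*$ and $\sigma^*=-\sigma=\sigma^{-1}$, i.e. $(\ker D_{s_0},\sigma)$ is a quaternionic vector space.

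\textbf{Step 3: Kramers' degeneracy forces even jumps.} For each $s$, $\dim\ker D_s = \dim\ker F(s)$. The jump in dimension, $\dim\ker D_{s_0} - \dim\ker F(s)$, equals the rank of $F(s)$ for $s\ne s_0$ near $s_0$. Now $F(s)$ acting on the quaternionic space $(\ker D_{s_0},\sigma)$ satisfies $\sigma F(s)\sigma^{-1}=F(s)^*$: this says $F(s)$ is itself odd symmetric on the finite-dimensional Hilbert space $\ker D_{s_0}$. An odd symmetric operator on a finite-dimensional complex Hilbert space has even-dimensional kernel by Kramers' degeneracy (this is exactly the statement cited as \cite{KleinMartin1952} in the Introduction: an anti-linear anti-involution on a subspace makes that subspace a quaternionic vector space, hence even-dimensional over $\CC$). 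Therefore $\dim\ker F(s)$ is even for every $s$, so $\dim\ker D_{s_0}$ and $\dim\ker D_s$ have the same parity near $s_0$ — in fact one can say more: $\dim\ker F(s)$ \emph{is} even, and the drop in dimension is even because both endpoints are even. Hence $d(s)\bmod 2$ is locally constant, so constant on $[0,1]$, proving the first assertion.

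\textbf{Step 4: The graded case.} For $D\in\widehat{\calB}_\tau(W,H)$ with $\tau:H^\pm\to H^\mp$, the relation $\tau D^\pm\tau^{-1}=(D^\pm)^*$ shows $D^+$ is an odd symmetric Fredholm operator from $H^+$ to $H^-$ in the ungraded sense, but now $\tau$ maps $\ker D^+\subset H^+$ into $H^-$, so $\ker D^+$ is \emph{not} quaternionic by itself. Instead, for a path $D_s\in\widehat{\calB}_\tau(W,H)$ I run the same reduction on $D^+_s$: the reduced family $F(s)$ acts on $\ker D^+_{s_0}\subset H^+$ with cokernel modelled on a subspace of $H^-$, and $\tau$ intertwines $F(s)$ with $F(s)^*$ mapping $\ker D^+_{s_0}\to(\ker D^-_{s_0})^{\!*}$-model. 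The argument is then: $\dim\ker D^+_s-\dim\ker D^+_{s_0} = -\rank F(s)$, and $\tau$ identifies $\coker F(s)$ antilinearly with $\ker F(s)^*$... — but the clean way is simply to note $\ind_\tau D^+ = \ind_\tau D$ for the \emph{ungraded} odd symmetric operator $D$ (since $\ker D=\ker D^+\oplus\ker D^-$ and $\tau$ swaps $\ker D^\pm$, giving $\dim\ker D^-=\dim\ker D^+$, hence $\dim\ker D = 2\dim\ker D^+$)... this would make $\ind_\tau D^+ \equiv \tfrac12\dim\ker D$, which is not directly the mod-2 reduction of $\dim\ker D$. So instead I keep the graded reduction: $F(s)$ is a map between two finite-dimensional spaces of equal dimension related by $\tau$, and I claim $\rank F(s)$ can still jump only evenly because the pair $(F(s),F(s)^*)$ assembles into an odd symmetric operator on $\ker D^+_{s_0}\oplus(\text{model of }\coker)$, whose kernel has dimension $\dim\ker F(s)+\dim\coker F(s)=2\dim\ker F(s)$ — automatically even — while what I actually need is the evenness of $\dim\ker D^+_{s_0}-\dim\ker F(s)$, i.e. evenness of $\rank F(s)$. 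That follows since $F(s):U\to U'$ with $\dim U=\dim U'$ and an antilinear $\tau:U\to U'$, $\tau^*=-\tau^{-1}$, intertwining $F(s)$ and $F(s)^*$: then $\tau^{-1}F(s):U\to U$ is odd symmetric in the ungraded sense, so $\dim\ker(\tau^{-1}F(s))=\dim U-\rank F(s)$ is even, hence $\rank F(s)\equiv\dim U\equiv\dim\ker D^+_{s_0}\pmod 2$, giving $\dim\ker D^+_s\equiv\dim\ker D^+_{s_0}\pmod 2$ near $s_0$. Connectedness finishes it.

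\textbf{Main obstacle.} The technical heart is Step~2–3: making the finite-dimensional Grushin/Fitting reduction \emph{equivariantly}, i.e. so that the anti-linear involution $\tau$ descends to an anti-involution $\sigma$ (resp. an intertwiner in the graded case) on the reduced spaces, compatibly with $F(s)\leftrightarrow F(s)^*$. One must choose the auxiliary complements $V^\pm$ and the cutoff projections to be $\tau$-stable (possible since $\tau$ preserves $\ker D_{s_0}$ and, via $D_{s_0}^*$, its image); once that is arranged, the odd-symmetry of $D_s$ transfers verbatim to $F(s)$, and Kramers' degeneracy does the rest. Everything else is the routine perturbation theory of Fredholm operators together with the elementary linear-algebra fact that a complex vector space carrying an anti-linear anti-involution is even-dimensional.
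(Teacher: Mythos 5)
Your overall strategy — reduce to a finite‑dimensional Grushin family $F(s)$ and control parity via the anti‑linear structure — is reasonable and could be made to work, but there is a genuine error at the pivot of both Step~3 and Step~4: you claim that \emph{"an odd symmetric operator on a finite-dimensional complex Hilbert space has even-dimensional kernel by Kramers' degeneracy."} This is false, and if it were true the entire theory would be vacuous (the $\tau$-index would be identically zero). Kramers' degeneracy says that a complex vector space carrying an anti-linear $\sigma$ with $\sigma^2 = -1$ has even dimension. But an odd symmetric $F$ with $\tau F\tau^{-1} = F^*$ does \emph{not} give such a $\sigma$ on $\ker F$: we have $\tau(\ker F) = \ker F^*$, which is generically a different subspace of $H$ than $\ker F$ (they coincide only when $F$ is normal), so $\tau$ does not restrict to an anti-involution on $\ker F$. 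For the same reason, in Step~2 the claim that $(\ker D_{s_0},\sigma)$ is quaternionic — that $\tau$ descends to an anti-involution of $\ker D_{s_0}$ — is unjustified. A concrete counterexample: on $H=\CC^2$ with $\tau(z_1,z_2)=(-\bar z_2,\bar z_1)$, the relation $\tau F\tau^{-1}=F^*$ is equivalent to $\operatorname{tr}F=0$, and $F=\left(\begin{smallmatrix}0&1\\0&0\end{smallmatrix}\right)$ is odd symmetric with one‑dimensional kernel.

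The correct finite‑dimensional parity statement is different and one level more subtle. Set $G(s):=\tau^{-1}F(s):\ker D_{s_0}\to\ker D_{s_0}$ (using $\tau:\ker D_{s_0}\xrightarrow{\sim}\ker D_{s_0}^*=\operatorname{coker}D_{s_0}$). Then $G(s)$ is \emph{anti-linear}, not "odd symmetric," and one computes $G(s)^*=-G(s)$. Consequently the map $(x,y)\mapsto\langle x,G(s)y\rangle$ is a genuinely $\CC$-\emph{bilinear} form (anti-linear in $G$ cancels the anti-linearity of the second slot of $\langle\cdot,\cdot\rangle$), and $G(s)^*=-G(s)$ makes it antisymmetric. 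An antisymmetric bilinear form over $\CC$ has even rank, hence $\operatorname{rank}F(s)=\operatorname{rank}G(s)$ is even and $\dim\ker F(s)\equiv\dim\ker D_{s_0}\pmod 2$. That is exactly the local constancy of the parity — note it is the \emph{rank} that is even, not the kernel dimension. Your Step~4 actually lands on the right congruence but via the wrong intermediate claim; as written the logic simultaneously proves $\dim\ker D^+_s$ is always even, which is false and would trivialize Theorem~\ref{T:splittingtau} and Theorem~\ref{T:tauindex}. For comparison, the route the authors take (visible in the proof of Lemma~\ref{L:kerKsDs}) avoids the Grushin reduction entirely: one looks at the low spectrum of $D_s^*D_s$; each eigenspace $E_{s,\lambda}$ with $\lambda>0$ carries the genuine anti-linear anti-involution $\lambda^{-1/2}\tau D_s$ (square $=-1$) and is therefore even‑dimensional by Kramers, while $\sum_{\lambda<\delta}\dim E_{s,\lambda}$ is locally constant — so $\dim\ker D_s\pmod 2$ is locally constant. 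That argument applies Kramers to the \emph{nonzero} eigenspaces, where an honest anti-involution exists, rather than to $\ker D_s$ or $\ker F(s)$, where it does not.
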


We now discuss some geometric settings leading to many interesting of odd symmetric operators, \cite{BrSaeedi24index}.

\subsection{Quaternionic vector bundles}\label{SS:quaternionic}
An\textit{ involutive manifold}  $(M,\theta^M)$ is  a Riemannian manifold $M$ together with a metric preserving involution  $\theta^M:M\to M$.  A {\em quaternionic vector bundle} over an involutive manifold $M$ is Hermitian vector bundle $E$ endowed with an anti-linear anti-unitary bundle map $\theta^E:E\to \theta^*E$, such that $(\theta^E)^2=-1$, cf. \cites{Dupont69,DeNittisGomi15,Hayashi17}. This means that for every $x\in M$ there exists an anti-linear map $\theta^E_x:E_x\to E_{\theta(x)}$, which depends smoothly on $x$ and satisfies $\theta^E_{\theta^M(x)}\, \theta^E_x=-1$,
$(\theta^E_x)^*\theta^E_x=1$.

If $E=E^+\oplus E^-$ is a graded vector bundle and  $\theta^E$ is {\em odd with respect to the grading}  (i.e. $\theta^E(E^\pm)=E^\mp$), we say that $(E,\theta^E)$ is a {\em graded quaternionic bundle}.

Given a quaternionic vector bundle $(E,\theta^E)$ over an involutive manifold  $(M,\theta^M)$ we define an anti-unitary anti-involution $\tau$ on the space $\Gamma(M,E)$ of sections of $E$ by 
\begin{equation}\label{E:tau=}
    \tau:\, f(x)\ \mapsto \ \theta^E f(\theta^M x), \qquad f\in\Gamma(M,E).  
\end{equation}
\subsection{Odd symmetric elliptic operators}\label{SS:oddelliptic}
Let $(E=E^+\oplus E^-,\theta^E)$ be a graded quaternionic vector bundle over an involutive manifold $(M,\theta^M)$. Let $D:\Gamma(M,E)\to \Gamma(M,E)$ be a self-adjoint odd symmetric elliptic differential operator on $M$, which is odd with respect to the grading, i.e. $D:\Gamma(M,E^\pm)\to \Gamma(M,E^\mp)$. We set $D^\pm:= D|_{\Gamma(M,E^\pm)}$. Then $\tau\, D^\pm\, \tau^{-1}= (D^\pm)^*= D^\mp$ and
\[
    D\ = \ \begin{pmatrix}
        0&D^-\\D^+&0
    \end{pmatrix}
\]
with respect to the decomposition $E=E^+\oplus E^-$.

Suppose now that $D$ is Fredholm, then the $\tau$-index $\ind_\tau D^+\in \ZZ_2$ is defined by \eqref{E:tauindex}. In \cite{BrSaeedi24index}, the first and the second authors computed this index in several examples and proved the $\ZZ_2$-valued analogs of the Relative Index Theorem, the Callias index theorem, and Boutet de Monvel's index theorem for Toeplitz operators. 

\section{The Splitting Theorem for the $\tau$-index}\label{S:splitttingtauindex}

In this section, we prove an analog of the Splitting theorem~\ref{T:splitting} for the $\tau$-index.

\subsection{An odd symmetric Dirac operator}\label{SS:oddDirac}
Let  $(E,\theta^E)$ be a graded quaternionic bundle over a closed connected oriented involutive manifold $(M,\theta)$, which is also a graded Dirac bundle (no connections between the two structures on $E$ yet). As before, we assume that $\theta^E$ is odd with respect to the grading: $\theta^E:E^\pm\to E^\mp$.

Let $\tau$ be as in \eqref{E:tau=} and let $D$ be a Dirac-type operator on $E$, cf. \eqref{E:Dirac}. Suppose $D$ is odd symmetric,  $\tau\, D\, \tau^{-1}= D$. Then  $D^\pm= \tau\, D^\mp\, \tau^{-1}$.

\subsection{A $\theta^M$-invariant hypersurface} \label{SS:hypersurface}
Suppose there exists a $\theta^M$-invariant oriented hypersurface $N\subset M$. Then there is a small equivariant neighborhood $U$ of $N$ such that $N$ divides it into two disjoint components $U_1$ and $U_2$. We assume that 
\begin{equation}\label{E:tauU1=U2}
    \theta^M(U_1)= U_2.
\end{equation} 
Then there exists a function $t:U\to \RR$ such that $N=t^{-1}(0)$, $dt$ never vanishes on $N$, and 
\[
    t\big(\theta^M(x)\big) \ = \ -t(x).
\]
A possibly smaller neighborhood of $N$, which by a slight abuse of notations we still denote by $U$, can be identified with the cylinder $(-r,r)\times \dm$ in such a way that 
\[
    \theta^M(t,y)\ = \ \big(-t,\theta^M(y)\big), 
    \qquad t\in (-r,r), \ \ y\in N. 
\]
In $U$ we have $\tau\p_t\tau^{-1}= -\p_t$. Using \eqref{E:adaptedN} and $\tau\, D\, \tau^{-1}= D$ we obtain 
\begin{equation}\label{E:tauct}
    \tau\,c(dt)\,\tau^{-1} \ = \ -\, c(dt).
\end{equation}

\begin{lemma}\label{L:tausymmetricA}
There exists an adapted operator $A$ for $D$ on $N$ such that
\begin{equation}\label{E:tauA}
    \tau\,A\,\tau^{-1} \ = \ -\,A.
\end{equation}
\end{lemma}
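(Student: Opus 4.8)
The plan is to start from an arbitrary adapted operator $A_0$ for $D$ on $N$, as provided by Section~\ref{SS:adapted operator}, and then average it against the involution $\tau$ to force the anti-commutation relation \eqref{E:tauA}. The key observation is that since $\tau D\tau^{-1}=D$ and $\tau c(dt)\tau^{-1}=-c(dt)$ by \eqref{E:tauct}, applying $\tau(\cdot)\tau^{-1}$ to the decomposition \eqref{E:adaptedN} should again produce a valid decomposition of $D$ near $N$, but now with $-\tau A_0\tau^{-1}$ playing the role of $A_0$ and with a correspondingly conjugated remainder term $-\tau R_{t}\tau^{-1}$. Hence $A_1:=-\tau A_0\tau^{-1}$ is \emph{also} an adapted operator for $D$ on $N$, in the sense of Section~\ref{SS:adapted operator}: it is a self-adjoint first-order elliptic operator on $C^\infty(N,E_N)$, and the difference $A_1-A_0$ is a zeroth-order operator (since two adapted operators differ by an operator of order zero, cf.\ Lemma~4.1 of \cite{BarBallmann12} and the surrounding discussion).

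Next I would set
\[
	A\ :=\ \tfrac12\,(A_0+A_1)\ =\ \tfrac12\,\big(A_0-\tau A_0\tau^{-1}\big).
\]
Because $A_1-A_0$ is of order zero, $A$ differs from $A_0$ by a zeroth-order operator, so $A$ is again an adapted operator for $D$ on $N$ (the class of adapted operators is an affine space over the zeroth-order self-adjoint bundle maps, and $R_t$ absorbs the change). One checks $A$ is formally self-adjoint: $A_0$ is, and $\tau A_0\tau^{-1}$ is the $\tau$-conjugate of a self-adjoint operator, which is again self-adjoint because $\tau$ is anti-unitary (for an anti-linear isometry, $(\tau T\tau^{-1})^*=\tau T^*\tau^{-1}$, so self-adjointness is preserved). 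Finally, the anti-commutation relation is immediate:
\[
	\tau\,A\,\tau^{-1}\ =\ \tfrac12\,\big(\tau A_0\tau^{-1}-\tau^2 A_0\tau^{-2}\big)
	\ =\ \tfrac12\,\big(\tau A_0\tau^{-1}-A_0\big)\ =\ -\,A,
\]
using $\tau^2=-1$ so that $\tau^2 A_0\tau^{-2}=A_0$.

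The main point requiring care — and the step I expect to be the only real obstacle — is verifying that $A_1=-\tau A_0\tau^{-1}$ genuinely qualifies as an adapted operator, i.e.\ that conjugating the identity \eqref{E:adaptedN} by $\tau$ preserves its structural form with a legitimate lower-order remainder. Concretely, from $D=c(dt)(\partial_t+A_0+R_{t}^{0})$ and $\tau D\tau^{-1}=D$ one gets
\[
	D\ =\ \tau D\tau^{-1}\ =\ \big(\tau c(dt)\tau^{-1}\big)\Big(\tau\partial_t\tau^{-1}+\tau A_0\tau^{-1}+\tau R_{t}^{0}\tau^{-1}\Big)
	\ =\ -c(dt)\Big(-\partial_t+\tau A_0\tau^{-1}+\tau R_{t}^{0}\tau^{-1}\Big),
\]
using $\tau\partial_t\tau^{-1}=-\partial_t$ and \eqref{E:tauct}; this equals $c(dt)\big(\partial_t-\tau A_0\tau^{-1}-\tau R_{t}^{0}\tau^{-1}\big)$, which has exactly the form \eqref{E:adaptedN} with adapted operator $A_1=-\tau A_0\tau^{-1}$ and remainder $R_{t}^{1}=-\tau R_{t}^{0}\tau^{-1}$ of order at most one with $R_{0}^{1}$ of order zero. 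One must also confirm that $\tau$ maps $C^\infty(N,E_N)$ to itself and is first-order-preserving, which follows because $\theta^M$ restricts to a diffeomorphism of $N$ (as $N$ is $\theta^M$-invariant) and $\theta^E$ covers it; conjugation by such a geometric anti-linear map preserves differential-operator order. With these verifications in hand, the symmetrization argument above completes the proof.
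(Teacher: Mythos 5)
Your proposal is correct and follows essentially the same route as the paper: both start from an arbitrary adapted operator $A_0=\tilde A$ and symmetrize, setting $A=\tfrac12\bigl(A_0-\tau A_0\tau^{-1}\bigr)$, after noting that conjugation of the decomposition \eqref{E:adaptedN} by $\tau$ produces another valid adapted decomposition. The one place where the paper is more careful than you are is the $t$-flip: since $\tau$ acts on the cylinder by $(t,y)\mapsto(-t,\theta^M y)$, the conjugated remainder that appears is $\tau\,\tilde R_{-t}\,\tau^{-1}$, not $\tau\,\tilde R_{t}\,\tau^{-1}$ — this is what the paper records in \eqref{E:tauA+R} by writing $-\tilde R_{-t}$ on the right — but the substance of your argument (that the conjugated remainder is again of order at most one with order zero at $t=0$, so $-\tau A_0\tau^{-1}$ is a legitimate adapted operator) is unaffected.
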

\begin{proof}
Let $\tilde{A}$ be any adapted operator for $D$, so that 
\begin{equation}\label{E:adaptedN'}
    D\ = \ c(dt)\,\left(\, \frac{\p}{\p t}+\tilde{A}+\tilde{R}_t\,\right).
\end{equation}

From \eqref{E:adaptedN} and \eqref{E:tauct} we see that 
\begin{equation}\label{E:tauA+R}
   \tau\,(\tilde{A}+\tilde{R}_t)\,\tau^{-1} 
   \ = \ -\,\tilde{A}-\tilde{R}_{-t}.
\end{equation}
Set 
\[
    A\ := \ \frac{\tilde{A}-\tau \tilde{A}\tau^{-1}}2, \qquad
    R_t\ := \ \frac{\tilde{R}_t-\tau \tilde{R}_{-t}\tau^{-1}}2.
\]
Then $R_t$ is a differential operator of order at most one,  $R_0$ is an operator of order zero, and 
\begin{equation}\label{E:adaptedN''}
    D\ = \ c(dt)\,\left(\, \frac{\p}{\p t}+A+R_t\,\right).
\end{equation}
Hence, $A$ is an adapted operator for $D$ which clearly satisfies \eqref{E:tauA}.
\end{proof}
\begin{remark}\label{E:tauAproduct}
In the product case, when $R_t=0$, any adapted operator satisfies \eqref{E:tauA} by \eqref{E:tauA+R}.
\end{remark}
\subsection{Cutting along the invariant hypersurface}\label{SS:cuttingN}
Fix an adapted operator $A$ for $D$ which satisfies \eqref{E:tauA}. Let $\hM$, $\hE$, $\hD$,  $N_1$, $N_2$, $t$, $t_1$, $t_2$, $A_1$, and $A_2$ be as in Section~\ref{SS:splittingthm}. Then $\theta^M$ induces a natural involution $\theta^{\hM}$ on $\hM$ such that $\theta^{\hM}:N_1\to N_2$. Similarly, $\theta^E$ induces an anti-unitary anti-involution $\theta^{\hE}$ of $\hE$.

By a slight abuse of notation we denote by $\tau:\Gamma(\hM,\hE)\to \Gamma(\hM,\hE)$ the operator induced by $\theta^{\hM}$ and $\theta^{\hE}$ as in \eqref{E:tau=}. Then 
\begin{equation}\label{E:tauhD}
    \tau \hD \tau^{-1}\ = \ \hD^*,
\end{equation}
where $\hD^*$ denotes the operator formally adjoint to $\hD$. We will say that $\hD$ is {\em formally odd symmetric}. 

From \eqref{E:tauA} and \eqref{E:tauhD} we conclude that
\begin{equation}\label{E:tauA1A2}
    \tau\, A_1\,\tau^{-1}\ = \ A_2.
\end{equation}
Recall that the boundary conditions $B(A)$ and $\oB(A)$ were defined in Section~\ref{SS:gAPS}. It follows from \eqref{E:tauA1A2} that $\tau$ maps $B(A_1)$ to $B(A_2)$ and  $\oB(A_1)$ to $\oB(A_2)$. Recall from Section~\ref{SS:splittingthm} that $\hA= A_1\oplus A_2$ is an adapted operator for $\hD$ and set 
\[
    \hat{B}(\hA)\ := \ B(A_1)\oplus \oB(A_2).
\]

\begin{lemma}\label{L:hDoddsymmetric}
The operator $\hD_{\hat{B}(\hA)}$ is odd symmetric. 
\end{lemma}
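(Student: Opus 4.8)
The plan is to reduce the statement to a single identity between boundary conditions and then to verify that identity. First I would record the general mechanism. Since $\hD$ is a Dirac-type operator it is formally self-adjoint, so $\hD^{*}=\hD$ as a differential operator, and \eqref{E:tauhD} becomes $\tau\hD\tau^{-1}=\hD$. The operator $\tau$ on $\Gamma(\hM,\hE)$ (induced by $\theta^{\hM}$ and $\theta^{\hE}$ as in \eqref{E:tau=}) is an anti-unitary anti-involution of $H=L^{2}(\hM,\hE)$; because $\tau\hD\tau^{-1}=\hD$ it preserves $\dom\hD_{\max}$, and because it is the composition of the fibrewise map $\theta^{\hE}$ with the isometry $\theta^{\hM}$ it intertwines the restriction map $\cRR$ with the induced fibrewise anti-involution of $E_{\p\hM}$. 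Consequently, for every boundary condition $\hat B\subset\cHH(\hA)$ one has $\tau(\dom\hD_{\hat B})=\dom\hD_{\tau(\hat B)}$, and hence $\tau\,\hD_{\hat B}\,\tau^{-1}=\hD_{\tau(\hat B)}$. On the other hand $(\hD_{\hat B})^{*}=\hD_{\hat B^{\ad}}$ by \eqref{E:Bad}. Comparing the two expressions, $\hD_{\hat B}$ is odd symmetric --- with the domain condition $\tau(\dom\hD_{\hat B})=\dom(\hD_{\hat B})^{*}$ of Definition~\ref{D:odd symmetric operator} automatically built in --- exactly when $\tau(\hat B)=\hat B^{\ad}$. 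I would also note here that $\hat B(\hA)=B(A_{1})\oplus\oB(A_{2})$ is an elliptic boundary condition, being the direct sum over $N_{1}\sqcup N_{2}$ of the APS condition along $N_{1}$ and the dual APS condition along $N_{2}$ (ellipticity is checked component by component), so $\hD_{\hat B(\hA)}$ is Fredholm and in particular a closed, densely defined operator to which the notion of odd symmetry applies.

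It then remains to check $\tau(\hat B(\hA))=\hat B(\hA)^{\ad}$. For the right-hand side, since $\p\hM=N_{1}\sqcup N_{2}$ is a disjoint union, both the sesquilinear pairing defining $(\cdot)^{\ad}$ in \eqref{E:adbc} and the splitting $\cHH(\hA)=\cHH(A_{1})\oplus\cHH(A_{2})$ decompose over the two components, so $\hat B(\hA)^{\ad}=(B(A_{1}))^{\ad}\oplus(\oB(A_{2}))^{\ad}$. Computing each summand as in \eqref{E:adjointAPS} gives $(B(A_{1}))^{\ad}=\oB(A_{1})$ and, dually, $(\oB(A_{2}))^{\ad}=B(A_{2})$, whence $\hat B(\hA)^{\ad}=\oB(A_{1})\oplus B(A_{2})$. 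For the left-hand side I would use that $\tau$ covers $\theta^{\hM}\colon N_{1}\to N_{2}$, hence interchanges the summands $\cHH(A_{1})$ and $\cHH(A_{2})$, together with $\tau A_{1}\tau^{-1}=A_{2}$ from \eqref{E:tauA1A2}: being anti-linear and since the $A_{j}$ are self-adjoint, $\tau$ carries the $\lambda$-eigenspace of $A_{1}$ onto the $\lambda$-eigenspace of $A_{2}$ and back for every real $\lambda$, so it maps $B(A_{1})$ onto $B(A_{2})$ and $\oB(A_{2})$ onto $\oB(A_{1})$ --- exactly as recorded in the text just above the lemma. Therefore $\tau(\hat B(\hA))=\oB(A_{1})\oplus B(A_{2})=\hat B(\hA)^{\ad}$, and the lemma follows.

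The routine parts are the statements relating $\tau$ to $\cRR$ and to $\dom\hD_{\max}$, and the identification of the adjoints of the APS and dual-APS conditions, both of which are immediate from \eqref{E:adbc} and \eqref{E:adjointAPS}. The one point I would be careful about is the bookkeeping of the interchange $N_{1}\leftrightarrow N_{2}$: it is the relation $\tau A_{1}\tau^{-1}=A_{2}$ --- and not $\tau A_{1}\tau^{-1}=-A_{2}$ --- coming from the choice in \eqref{E:tauA} of an adapted operator $A$ with $\tau A\tau^{-1}=-A$, that lets $\tau$ preserve rather than reverse the spectral half-lines defining the APS and dual-APS conditions, and hence makes $\tau(\hat B(\hA))$ coincide with $\hat B(\hA)^{\ad}$ rather than with some other condition.
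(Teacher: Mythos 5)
Your proof is correct and follows the same route as the paper: compute $\hat B(\hA)^{\ad}=\oB(A_1)\oplus B(A_2)$ from \eqref{E:adjointAPS}, check $\tau(\hat B(\hA))=\hat B(\hA)^{\ad}$ via \eqref{E:tauA1A2}, and invoke \eqref{E:Bad} and \eqref{E:tauhD}. You simply spell out what the paper leaves implicit (the eigenspace bookkeeping, the compatibility of $\tau$ with $\cRR$ and $\dom\hD_{\max}$, and the ellipticity of $\hat B(\hA)$).
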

\begin{proof}
Equation \eqref{E:adjointAPS} implies that 
\[
    \hat{B}(\hA)^{\ad}\ = \ \oB(A_1)\oplus B(A_2).
\]
Thus $\tau$ sends $\hat{B}(\hA)$ to $\hat{B}(\hA)^\ad$. Hence, the lemma follows from \eqref{E:Bad} and \eqref{E:tauhD}.
\end{proof}

\subsection{The Splitting theorem}\label{SS:splittingtau}
We are now ready to formulate the main result of this section:

\begin{theorem}\label{T:splittingtau}
Let $D$ be an odd symmetric Dirac-type operator on a graded quaternionic Dirac bundle $(E=E^+\oplus E^-,\theta^E)$ over a closed involutive manifold $(M,\theta^M)$. Suppose $N\subset M$ is a $\theta^M$-invariant hypersurface such that \eqref{E:tauU1=U2} is satisfied. Let $\hA$, $A_1$, and $A_2$ be as above. Then 
\begin{equation}\label{E:splittingtau}
    \ind_\tau D^+\ = \ \ind_\tau \hD^+_{\hat{B}(\hA)}.
\end{equation}
\end{theorem}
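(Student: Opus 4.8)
The strategy is to mimic the proof of the classical Splitting Theorem~\ref{T:splitting} of \cite{BarBallmann12}, but to work modulo $2$ throughout so as to accommodate the fact that the intermediate operators in the homotopy need not be odd symmetric. First I would set up the standard deformation: choose a family of boundary conditions $B_s$ (for $s\in[0,1]$) and isometric identifications $K_s$ interpolating between the situation on $M$ (transmission condition at $N$, realizing $D^+$) at $s=0$ and the situation on $\hM$ ($B(A_1)\oplus\oB(A_2)$, realizing $\hD^+_{\hat B(\hA)}$) at $s=1$; this is exactly the family $\hD^+_{B_s}K_s$ announced in Section~\ref{SS:KsDs}. The classical argument shows each $\hD^+_{B_s}K_s$ is Fredholm and that the index is constant in $s$; here I only need the kernels, so the key input is the announced Lemma~\ref{L:kerKsDs}, stating that $\dim\ker(\hD^+_{B_s}K_s)$ is independent of $s$ modulo $2$.

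Granting that lemma, the proof of \eqref{E:splittingtau} is then a two-line comparison of endpoints. At $s=0$ the operator $\hD^+_{B_0}K_0$ is (conjugate to) $D^+$ on the closed manifold $M$, so $\dim\ker(\hD^+_{B_0}K_0)=\dim\ker D^+$, and reducing mod $2$ gives $\ind_\tau D^+$. At $s=1$ the operator $\hD^+_{B_1}K_1$ is (conjugate to) $\hD^+_{\hat B(\hA)}$, which by Lemma~\ref{L:hDoddsymmetric} is odd symmetric, so $\dim\ker(\hD^+_{B_1}K_1)=\dim\ker\hD^+_{\hat B(\hA)}$ and reducing mod $2$ gives $\ind_\tau\hD^+_{\hat B(\hA)}$. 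Since the two endpoint kernel dimensions agree modulo $2$ by Lemma~\ref{L:kerKsDs}, equation \eqref{E:splittingtau} follows. (The identity $\ind_\tau D = \dim\ker D \bmod 2$ is the definition \eqref{E:tauindex}, and for the endpoint on $M$ one uses that $D$ is odd symmetric so this is indeed well-defined and equals $\dim\ker D^+ \bmod 2$ by \eqref{E:tauindex graded}.)

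\textbf{The main obstacle} is precisely Lemma~\ref{L:kerKsDs}: showing that $\dim\ker(\hD^+_{B_s}K_s) \bmod 2$ is constant even though $\hD^+_{B_s}K_s$ is not odd symmetric for generic $s$, so Theorem~\ref{T:homotopyindex} does not apply directly. I expect the argument to go by exploiting that the whole family is built from the single odd symmetric operator $D$ on $M$ together with the $\tau$-invariance relations \eqref{E:tauA1A2} linking $A_1$ and $A_2$. Concretely, one should be able to write $\hD^+_{B_s}K_s$ as the ``$+$ part'' of a self-adjoint operator on the doubled space $\hE = \hE^+\oplus\hE^-$ which, while not itself odd symmetric, carries enough of the anti-unitary structure $\tau$ (swapping the two copies $N_1\leftrightarrow N_2$ of $N$) that $\tau$ pairs $\ker(\hD^+_{B_s}K_s)$ with the kernel of an associated operator in a way forcing the parity to be locally constant in $s$; alternatively, one identifies a larger odd symmetric operator whose kernel contains $\ker(\hD^+_{B_s}K_s)$ as a subspace of controlled even codimension. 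The detailed verification that the deformation respects these structures — in particular that the identifications $K_s$ and the boundary conditions $B_s$ can be chosen compatibly with $\tau$ away from the endpoints up to an even error — is where the real work lies, and it is carried out in Section~\ref{SS:KsDs} and Lemma~\ref{L:kerKsDs}.
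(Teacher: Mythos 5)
Your outline reproduces the paper's reduction exactly: pass to the transmission boundary condition $B_\tr$ on $\hM$ so that $\ind_\tau D^+ = \ind_\tau \hD^+_{B_\tr}$, deform $B_s$ from $B_0 = B_\tr$ to $B_1 = \hat B(\hA)$, conjugate by the domain isomorphisms $K_s$ from \cite[Theorem~8.12]{BarBallmann12} to obtain a continuous family $\hD^+_{B_s}K_s$ on a fixed Hilbert space, and then invoke Lemma~\ref{L:kerKsDs} to compare endpoints modulo $2$. That reduction is correct and is precisely what the paper does.

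The gap is that you have not proved Lemma~\ref{L:kerKsDs}, and your two speculative suggestions for how it might go do not match the argument that actually works. The difficulty is that $\hD^+_{B_s}K_s$ is not odd symmetric, so one cannot pair its kernel with a cokernel via $\tau$, nor is there a natural ``larger odd symmetric operator'' with controlled even codimension. What the paper does instead is a spectral argument on the \emph{nonzero} eigenspaces of the Laplacian-type operator: replace $K_s$ by the unitary $U_s = K_s(K_s^\dagger K_s)^{-1/2}$ and study the nonnegative self-adjoint operator $P_s = (\hD^+_{B_s}U_s)^\dagger\hD^+_{B_s}U_s$ on $\dom(\hD^+_{B_0})$, whose kernel has the same dimension as that of $\hD^+_{B_s}K_s$. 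Its spectrum near $0$ is discrete, and $U_s$ identifies each eigenspace $E_{s,\lambda}$ with $\lambda\in(0,\delta)$ with the $\mu=\lambda/(1-\lambda)$-eigenspace $\tilde E_{s,\lambda}$ of $\hD^-_{B_s^\ad}\hD^+_{B_s}$. On $\tilde E_{s,\lambda}$ the operator $\calE_{s,\lambda} := \mu^{-1/2}\,\tau\,\hD^+_{B_s}$ satisfies $\calE_{s,\lambda}^2 = -\ID$ (using $\tau\hD^+_{B_s}\tau^{-1}=\hD^-_{B_s^\ad}$ and $\tau^{-1}=-\tau$), hence is an anti-unitary anti-involution, and Kramers' degeneracy forces $\dim\tilde E_{s,\lambda}$ to be even. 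Eigenvalues therefore leave or enter $0$ in even clumps as $s$ varies, so $\dim\ker P_s\bmod 2$ is locally constant. Without this Kramers-on-eigenspaces argument (or an equivalent one), the proof of \eqref{E:splittingtau} is incomplete.
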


The rest of this section is occupied with the proof of this theorem, which is modeled on the proof of Theorem~5.11 of \cite{BrShi21odd}. 

\subsection{The transmission boundary conditions} \label{SS:transmissionbc}

First, we reformulate the Splitting theorem~\ref{T:splittingtau} solely in terms of the manifold $\hM$. Specifically, we introduce  a boundary condition $B$ on $\hM$ such that $\ind_\tau \hD^+_B= \ind_\tau D^+$ by construction.

Notice that 
\[
    H^{1/2}(\p \hM,E_{\p \hM})\ = \
    H^{1/2}(N_1,E_{N_1})\oplus H^{1/2}(N_2,E_{N_2}). 
\]
We define the {\em transmission boundary conditions} on $\hM$
\begin{equation}\label{E:transmission}
		B_\tr \;:=\;
	\big\{(\bfu,\bfu)\in H^{1/2}(N_1,E_{N_1}^+)\oplus H^{1/2}(N_2,E_{N_2}^+)\big\}. 
\end{equation}
It is shown in Example~1.16(d) of \cite{BarBallmann12} that this is an elliptic boundary condition with $B^\ad_\tr=B_\tr$. Clearly, $\tau:B_\tr\to B_\tr$.

The canonical pull-back of sections from $E$ to $\hE$ identifies the domains of $D^+$ and $\hD^+_{B_\tr}$. It follows that    
\[
    \ind_\tau \hD^+_{B_\tr}\ = \ \ind_\tau D^+.
\]
Hence, the Splitting theorem is equivalent to the equality
\begin{equation}\label{E:splittingtauB}
    \ind_\tau \hD^+_{B_\tr}\ = \ \ind_\tau ( \hD)^+_{\hat{B}(\hA)}.
\end{equation}
Below we prove this equality by constructing a homotopy between the operators $\hD^+_{B_\tr}$ and $( \hD)^+_{\hat{B}(\hA)}$.

\subsection{A deformation of boundary conditions}
\label{SS:deformationbc}
We construct a continuous family of elliptic boundary conditions connecting $\hat{B}(\hA)$ and the transmission boundary condition $B$. 

Any $\bfu\in H^{1/2}(N,E_N^+)$ can be decomposed as $\bfu=\bfu^++ \bfu^-$, where $\bfu^+\in H^{1/2}_{[0,\infty)}(N,E_N^+)$ and $\bfu^-\in H^{1/2}_{(-\infty,0)}(N,E_N^+)$. Then $\bfu^+\in \oB(A_2)$ and $\bfu^-\in B(A_1)$. Define two families of boundary conditions on $\hat{N}:=N_1\sqcup N_2$:
\[
	B_{s}\ := \ 
	\Big\{\, 
    \big(\bfu^-+(1-s)\bfu^+,\bfu^++(1-s)\bfu^-\big):\,
  \bfu\in H^{1/2}(N,E_{N}^+)\,\Big\}.
\]
Then $B_s\subset H^{1/2}(N_1,E_{N_1}^+)\oplus H^{1/2}(N_2,E_{N_2}^+)$ and the  adjoint of $B_s$ is the boundary condition 
\[
        B_s^{\rm ad}\\
        =\; \Big\{(\bfv^-+(1-s)\bfv^+,\bfv^++
        (1-s)\bfv^-)\Big\}
        \subset H^{1/2}(\p\hM,E_{\p\hM}^-),
\]
where $\bfv^-\in H_{(-\infty,0]}^{1/2}(N,E_{N}^-)$ and $\bfv^+\in H_{(0,\infty)}^{1/2}(N,E_{N}^-)$. Thus $B_s$ is an elliptic boundary condition for all $s\in[0,1]$ and we get a family of Fredholm operators $\{\hD_{B_s}\}_{0\le s\le1}$.

By \eqref{E:tauA1A2}, 
\begin{equation}\label{E:tauB+B}
    \tau:\, B_s\ \to \  B_s^{\rm ad}.
\end{equation}
Hence, 
\begin{equation}\label{E:tauhDB+B}
    \tau\, \hD^+_{B_s}\, \tau^{-1}\ = \ 
    \hD^-_{B_s^{\rm ad}}.
\end{equation}
By \eqref{E:Bad}, the right-hand side of this equality is equal to the adjoint of $\hD^+_{B_s^{\rm ad}}$. Hence, the operators $\hD^+_{B_s}$ are odd symmetric, for all $s\in [0,1]$.

\subsection{A family of operators with the same boundary conditions}
\label{SS:family of operators}
Clearly, $\hat{B}(\hA) = B_1$ and the transmission boundary conditions $B= B_0$. Thus the family $\hD^+_{B_s}$ interpolates between $\hD^+_{\hat{B}(\hA)}$ and $\hD^+_{B}$. However, the operators $D^+_{B_s}$ have different domains, so Theorem~\ref{T:homotopyindex}, can no be applied directly. We finish the proof of Theorem~\ref{T:splittingtau} by a slight modification of the method of \cite[Lemma 8.11, Theorem 8.12]{BarBallmann12}: we construct a continuous family of operators 
\[
    K_s:\,L^2(\hM,\hE^+)\to L^2(\hM,\hE^+)
\]
such that the restriction 
\[
    K_s\;:\;\dom \hD^+_{B_\tr}\;\to\;\dom \hD^+_{B_s}
\]
is an isomorphism. Then   
\begin{equation}\label{E:DKs}
    \hD^+_{B_s}\circ K_s:\,  \dom \hD^+_{B_\tr}
    \ \to \  L^2\big(\hM,\hE^-\big)
\end{equation}
is a continuous family of operators. These operators are not odd symmetric. But in Lemma~\ref{L:kerKsDs} we show that $\dim\ker \hD^+_{B_s}\circ K_s$ is independent of $s$ modulo 2. Hence, 
\begin{multline}\label{E:indDK=indD}
    \ind_\tau \hD^+_{B_\tr}\ \equiv \ \ind_\tau \hD^+_{B_1} 
    \ \equiv \ \dim\ker \hD^+_{B_1}\circ K_1
    \\ \equiv \ \dim\ker \hD^+_{B_0}\circ K_0
    \ \equiv \ \ind_\tau \hD^+_{B_0}
    \ \equiv \ \ind_\tau \hD^+_{\hat{B}(\hA)},
\end{multline}
where, as usual, ``$\equiv$" denotes equality modulo 2.
This proves \eqref{E:splittingtauB} and, hence, Theorem~\ref{T:homotopyindex}. 

We finish the proof by constructing the family $K_s$ and showing that the dimension of the kernel of $\hD^+_{B_s}\circ K_s$ is independent of $s$ modulo 2.

\subsection{Family of isomorphisms of boundary conditions}
\label{SS:familykt}
Recall that we denote by $B_\tr$ the transmission boundary conditions \eqref{E:transmission} and that $B_0=B_\tr$. First, consider a family of isomorphisms
\[
	k_s:B_\tr\to B_s,\qquad 
	k_s(\bfu,\bfu):=(\bfu^-+(1-s)\bfu^+,\bfu^++(1-s)\bfu^-).
\]
Then $k_0=\id$ and $k_s$ are isomorphisms from $B_\tr$ to $B_s$. 

By definition,
\[
	(k_{s_1}-k_{s_2})(\bfu,\bfu)\;=\;(s_2-s_1)(\bfu^+,\bfu^-).
\]
Notice that $\|(\bfu^+,\bfu^-)\|_{H^{1/2}(\p\hM,\hE_{\p\hM})}\le\|(\bfu,\bfu)\|_{H_{}^{1/2}(\p\hM,\hE_{\p\hM})}$. 
Hence, for $s_1,s_2\in[0,1]$ with $|s_1-s_2|<\varepsilon$,  the operator
\[
	k_{s_1}-k_{s_2}:\, B_\tr \to \ H^{1/2}(\p\hM,\hE_{\p\hM})
\]
has a norm not greater than $\varepsilon$. This implies that $\{k_s\}$ is a continuous family of maps from $B_\tr$ to $H_{}^{1/2}(\p\hM,\hE_{\p\hM})$. 

\subsection{The extension map}\label{SS:extension map}
Fix a $\theta^M$-invariant smooth cut-off function $\chi:M\to [0,1]$ such that $\chi(t)=1$ in a small neighborhood of $N$ and $\chi(t)=0$ outside of $U\simeq(-r,r)\times N$. By slight abuse of notation, we denote by $\chi$ also the induced function on $\hM$.

Let  $B\in H^{1/2}(\p\hM,\hE^+_{\p\hM})$ be an elliptic boundary condition. 
By Lemma~7.3 of \cite{BarBallmann12} the domain $\dom \hD_B$ coincides with the space which is denoted by $H^1_D(M,E^+;B)$ in \cite{BarBallmann12}. Hence, it follows from Theorem~6.7(iii) of \cite{BarBallmann12} that the formula
\[
    (\E\phi)(t,y)\ := \ \chi(t)\cdot\exp\big(-|t||A|\big)\,\phi,
    \qquad  t\in (-r,r), \ y \in N,
\]
defines a continuous linear map,
\[
    \E: B\ \to \ \dom\hD^+_B,
\]
where
$\dom\hD^+_B$ is viewed as a Hilbert space with the scalar product 
\begin{equation}\label{E:scalarproductdom}
    \<\phi,\psi\>_{\dom\hD^+_B}\ := \
    \<\phi,\psi\>_{L^2(M,E^+)}\ + \ \<\hD_B\phi,\hD_B\psi\>_{L^2(M,E^-)}.
\end{equation}
The map $\E$ is called the {\em extension map}.

\subsection{A family of operators with the same domain}
\label{SS:KsDs}
Let $B_s$ be the family of boundary conditions defined in Section~\ref{SS:deformationbc} and $k_s$ be as in Section~\ref{SS:familykt}. Define a family of maps
\begin{equation}\label{E:Ks}
    K_s:\, \dom(\hD^+_{B_0})\ \to \ \dom(\hD^+_{B_s}),\qquad
    K_s(\phi)\ := \ \phi\ + \ \E\big(k_s(\cRR(\phi)) - \cRR(\phi)\big)
\end{equation}
It is shown in the proof of  Theorem~8.12 of \cite{BarBallmann12} that $K_s$ is a continuous family of isomorphisms of Hilbert spaces and that the compositions
\[
    \dom(\hD^+_{B_0}) \ \overset{K_s}{\longrightarrow} \  \dom(\hD^+_{B_s})
   \  \overset{\hD^+_{B_s}}{\longrightarrow} \ L^2(M,E^-)
\]
is a continuous family of bounded operators from the Hilbert space $\dom(\hD^+_{B_0})$ to $L^2(M,E^-)$. 

The composition $\hD^+_{B_s}K_s$ is not odd symmetric. However, the following lemma is true

\begin{lemma}\label{L:kerKsDs}
The dimension of the kernel of 
\begin{equation}\label{E:kerhDKs}
    \hD^+_{B_s}K_s:\, \dom(\hD^+_{B_0})\ \to \ L^2(M,E^-)
\end{equation}
is independent of $s$ modulo 2.
\end{lemma}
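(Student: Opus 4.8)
The plan is to track how $\dim\ker\hD^+_{B_s}K_s$ can change as $s$ varies, and to show that any change occurs in steps of even size. Since $K_s$ is an isomorphism of $\dom(\hD^+_{B_0})$ onto $\dom(\hD^+_{B_s})$, we have $\dim\ker\hD^+_{B_s}K_s=\dim\ker\hD^+_{B_s}$ for every $s$, so it suffices to understand how the kernel dimension of the genuinely odd symmetric family $\hD^+_{B_s}$ jumps. The family $\hD^+_{B_s}K_s$ is a norm-continuous family of Fredholm operators between the \emph{fixed} Hilbert spaces $\dom(\hD^+_{B_0})$ and $L^2(\hM,\hE^-)$, so its index is constant; combined with $(\hD^+_{B_s})^*=\hD^-_{B_s^{\ad}}$ and $\tau\hD^+_{B_s}\tau^{-1}=\hD^-_{B_s^{\ad}}$ from \eqref{E:tauhDB+B}, the anti-unitary $\tau$ gives a conjugate-linear isomorphism $\ker\hD^-_{B_s^{\ad}}\cong\coker\hD^+_{B_s}$ and hence $\ind\hD^+_{B_s}=\dim\ker\hD^+_{B_s}-\dim\ker\hD^-_{B_s^{\ad}}$, with $\tau$ matching the two summands; in particular the index is $0$ and $\dim\ker\hD^+_{B_s}=\dim\coker\hD^+_{B_s}$.

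\textbf{The local model near a jump.} Fix $s_0\in[0,1]$ and set $m:=\dim\ker\hD^+_{B_{s_0}}=\dim\coker\hD^+_{B_{s_0}}$. By continuity of the Fredholm family there is $\varepsilon>0$ and finite-dimensional reductions: choosing a complement, for $|s-s_0|<\varepsilon$ one writes $\hD^+_{B_s}K_s$ in block form with respect to $\ker\oplus(\ker)^\perp$ on the source and $\coker^\perp\oplus\coker$ on the target, so that $\ker\hD^+_{B_s}K_s$ is the kernel of an $m\times m$ matrix-valued continuous function $F(s)$ with $F(s_0)=0$. The point is that $F(s)$ carries, for $s$ near $s_0$, the structure inherited from $\tau$. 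The plan is to show that $F(s)$ (after identifying source and target via $\tau$, which is legitimate because $\tau$ intertwines $\ker$ and $\coker$) is, up to a continuous change of basis, \emph{skew-symmetric} or otherwise quaternionic; the kernel dimension of a continuous family of skew-symmetric matrices changes only by even amounts, because the rank of a skew-symmetric matrix is always even. Thus $\dim\ker\hD^+_{B_s}K_s\equiv m\pmod 2$ for all $s$ in a neighborhood of $s_0$; a connectedness/covering argument over $[0,1]$ then finishes the proof.

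\textbf{Extracting the skew-symmetry.} To make the previous paragraph precise I would proceed as follows. The operators $\hD^+_{B_s}$ are odd symmetric in the sense that $\tau\hD^+_{B_s}\tau^{-1}=(\hD^+_{B_s})^*$; equivalently, the sesquilinear form $(\phi,\psi)\mapsto (\hD^+_{B_s}\phi,\,\tau\psi)_{L^2}$ on $\dom(\hD^+_{B_s})$ is (conjugate-)symmetric or antisymmetric — one checks, using that $\tau^2=-1$ and that $\tau$ is anti-unitary, that it is \emph{antisymmetric}: $(\hD\phi,\tau\psi)=-\overline{(\hD\psi,\tau\phi)}$ after accounting for the sign from $\tau^2=-1$. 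Restricting this form to $\ker\hD^+_{B_{s_0}}$ and comparing with the block $F(s)$ obtained above (note $\tau$ maps $\ker\hD^+_{B_{s_0}}$ isomorphically onto $\ker\hD^-_{B_{s_0}^{\ad}}=\coker\hD^+_{B_{s_0}}$, which is exactly the target block), one identifies $F(s)$ with the matrix of a continuous family of $\tau$-antisymmetric forms on a fixed $m$-dimensional space. Any such form, in a $\tau$-adapted basis, has a skew-symmetric Gram matrix, and the even-rank property of skew-symmetric matrices gives $\dim\ker F(s)\equiv m\pmod 2$.

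\textbf{Main obstacle.} The delicate point is the penultimate one: the operators $K_s$ destroy odd symmetry, so $\hD^+_{B_s}K_s$ itself is not $\tau$-symmetric, and I cannot directly apply Theorem~\ref{T:homotopyindex}. I must instead argue at the level of $\hD^+_{B_s}$ (which \emph{is} odd symmetric) that the kernel-dimension parity is locally constant, and then transport this back to $\hD^+_{B_s}K_s$ via the identification $\ker\hD^+_{B_s}K_s\cong\ker\hD^+_{B_s}$ coming from $K_s$ being an isomorphism. Care is needed to ensure the finite-dimensional block reduction $F(s)$ can be chosen to respect the $\tau$-structure — i.e.\ that the spectral/complementary projections involved commute appropriately with $\tau$, or can be replaced by $\tau$-equivariant ones by averaging. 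This is where the bulk of the work lies; the skew-symmetry bookkeeping and the covering argument over $[0,1]$ are then routine.
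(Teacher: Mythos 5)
Your proposal and the paper share the same high-level idea---the parity of $\dim\ker$ is a homotopy invariant because the odd symmetry forces a Kramers-type degeneracy to kick in whenever eigenvalues move into or out of zero---but the local mechanism you propose is genuinely different from what the paper does, and the step you yourself flag as ``where the bulk of the work lies'' is in fact the entire content of the lemma and is left undone.

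Your route is: reduce to the odd symmetric family $\hD^+_{B_s}$ via $\dim\ker\hD^+_{B_s}K_s=\dim\ker\hD^+_{B_s}$, build a finite-rank (Grushin/Schur) reduction $F(s)$ acting on $\ker\hD^+_{B_{s_0}}$, identify source and target through $\tau:\ker\hD^+_{B_{s_0}}\to\coker\hD^+_{B_{s_0}}$, and argue that $F(s)$ is then skew, so its rank is even and $\dim\ker F(s)\equiv m\pmod 2$. Two concrete problems arise. First, once you pass to $\hD^+_{B_s}$ you have a family with \emph{varying domains}, so the usual construction of a continuous finite-dimensional reduction near $s_0$ (which relies on the family being norm-continuous between two fixed Hilbert spaces) is not directly available; you would need to reintroduce $K_s$ or $U_s$, which breaks the odd symmetry you just bought. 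Second, and more seriously, the antisymmetry of the bilinear form $B(\phi,\psi)=(\hD^+_{B_s}\phi,\tau\psi)$ (which you compute essentially correctly, modulo a stray conjugate; it is an antisymmetric \emph{bilinear} form, not a conjugate-antisymmetric sesquilinear one) does not by itself tell you that the Schur complement $F(s)$ is skew in a basis varying continuously in $s$. To get that, the Grushin data (the chosen complement of the range, the projection onto the kernel, the identification of $V$ with $W$) must all be set up $\tau$-equivariantly, and one must then propagate the symmetry through the inversion of the bordered operator. This is exactly the computation you postpone, and without it the parity conclusion does not follow.

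The paper sidesteps the Grushin reduction entirely. It replaces $K_s$ by the unitary part $U_s=K_s(K_s^\dagger K_s)^{-1/2}$ of its polar decomposition, forms the positive self-adjoint operator $P_s=(\hD^+_{B_s}U_s)^\dagger\hD^+_{B_s}U_s$ on the fixed space $\dom(\hD^+_{B_0})$, and observes that near $s_0$ the quantity $\dim\ker P_{s_0}$ equals the sum of $\dim E_{s,\lambda}$ over small $\lambda\in[0,\delta)$. The key trick is that $U_s$ transports $E_{s,\lambda}$ to the eigenspace $\tilde E_{s,\lambda}$ of $\hD^-_{B_s^\ad}\hD^+_{B_s}$ with eigenvalue $\mu=\lambda/(1-\lambda)$, and on this \emph{canonically defined} eigenspace (no choice of complement) the operator $\calE_{s,\lambda}=\mu^{-1/2}\,\tau\hD^+_{B_s}$ squares to $-\ID$, i.e.\ it is an anti-unitary anti-involution; Kramers' degeneracy then forces $\dim\tilde E_{s,\lambda}$ to be even for all $0<\lambda<\delta$. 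That gives the mod-$2$ stability without ever constructing an equivariant finite-rank reduction.

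So: the conclusion you aim for is correct and the philosophy is the same, but there is a genuine gap in your argument---the $\tau$-equivariant Grushin reduction is not produced, and producing it is nontrivial. If you want to complete your route you must either construct the bordered operator $\tau$-equivariantly and verify that the effective operator inherits skew-symmetry, or (as the paper does) abandon the Schur complement in favor of the eigenspaces of the second-order operator $\hD^-_{B_s^\ad}\hD^+_{B_s}$, which carry the anti-involution $\tau\hD^+_{B_s}$ for free.
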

\begin{proof}

Let $K_s^\dagger:\dom(\hD^+_{B_s})\to \dom(\hD^+_{B_0})$ denote the adjoint of $K_s$. Here, for an unbounded operator $A$ on $L^2(M,E)$ we reserve the notation $A^*$ for its adjoint with respect to the $L^2$-scalar products, and denote by $A^\dagger$ the adjoint with respect to some other scalar products, which will be specified in each case. In particular, $K_s^\dagger$ is defined with respect to the graph scalar products on $\dom(\hD^+_{B_s})$ and $\dom(\hD^+_{B_0})$.

Set 
\[
    U_s\ := \ K_s(K_s^\dagger K_s)^{-1/2} : \, \dom(D_{B_0})\,\ \to \ \dom(D_{B_s})
\]
is a family of unitary operators, $U_s^\dagger = U_s^{-1}$, and the composition 
\begin{equation}\label{E:UsDs}
    \hD^+_{B_s}U_s :\, \, \dom(\hD^+_{B_0})\ \to \ L^2(M,E^-)
\end{equation}
is a continuous family of bounded maps from the Hilbert space $\dom(\hD^+_{B_0})$ to $L^2(M,E^-)$. It is enough to show that the dimension of the kernel of these maps is constant modulo 2. 

Recall that the adjoint $(\hD^+_{B_s})^*$ of the unbounded operator $\hD^+_{B_s}$ is equal to $\hD^-_{B_s^\ad}$. Let 
\[
    (\hD_{B_s}^+)^\dagger:\,L^2(M,E^-)\ \to \ \dom(\hD^+_{B_s})
\]
denote the adjoint of the bounded operator 
\[
    \hD^+_{B_s}:\,\dom(\hD^+_{B_s}) \to \ L^2(M,E^-)
\]
between the Hilbert spaces $\dom(\hD^+_{B_s})$ and $L^2(M,E^-)$.
Then for $\phi\in \dom (\hD^+_{B_s})$ and $\psi\in \dom(\hD^-_{B_s^\ad})$ we have
\begin{multline}\notag
    \Big\<\, \phi, \hD^-_{B_s^\ad}\psi\,\Big\>_{L^2(M,E^+)} \ = \ 
    \Big\<\, \hD^+_{B_s}\phi, \psi\,\Big\>_{L^2(M,E^-)} \ = \
   \Big\<\,\, \phi, (\hD^+_{B_s})^\dagger\psi\,\Big\>_{\dom(\hD^+_{B_s})} \\ = \ 
    \Big\<\,\, \phi, (\hD^+_{B_s})^\dagger\psi\,\Big\>_{L^2(M,E^+)}\ + \
    \Big\<\,\, \hD^+_{B_s}\phi, \hD^+_{B_s}(\hD^+_{B_s})^\dagger\psi\,\Big\>_{L^2(M,E^-)}
    \\ = \ 
    \Big\<\, \phi, (1+\hD^-_{B_s^\ad}\hD^+_{B_s})(\hD^+_{B_s})^\dagger\psi\,\Big\>_{L^2(M,E^+)}.
\end{multline}
Since $\dom(\hD^-_{B_s^\ad})$ is dense in $L^2(M,E^-)$, comparing the left and right-hand sides of this equality gives
\begin{equation}\label{E:Ds*=Ds+}
    (\hD_{B_s}^+)^\dagger
    \ = \ \Big(1+\hD^-_{B_s^\ad}\hD^+_{B_s}\Big)^{-1}\hD^-_{B_s^\ad}.
\end{equation}

Let $\big(\hD^+_{B_s} U_s\big)^\dagger:L^2(M,E^-)\to \dom(\hD^+_{B_0})$ denote the adjoint of the operator \eqref{E:UsDs}. Using $U_s^\dagger=U_s^{-1}$ and \eqref{E:Ds*=Ds+}, we obtain
\[
    \big(\hD^+_{B_s} U_s\big)^\dagger\ = \ 
    U_s^{-1}\Big(1+\hD^-_{B_s^\ad}\hD^+_{B_s}\Big)^{-1}\hD^-_{B_s^\ad}.
\]

We now compute the dimension of the kernel of $\hD^+_{B_s}K_s$ as
\begin{multline}\label{E:dimkerlong}
    \dim\ker \hD^+_{B_s}K_s\ = \ \dim\ker \hD^+_{B_s}U_s\ = \ 
    \dim\ker \big(\hD^+_{B_s}U_s\big)^\dagger\hD^+_{B_s}U_s\\ = \ 
    \dim\ker U_s^{-1}\Big(1+\hD^-_{B_s^\ad}\hD^+_{B_s}\Big)^{-1}\hD^-_{B_s^\ad} \hD^+_{B_s}U_s.
\end{multline}
Here, the last two operators are positive self-adjoint endomorphisms of the Hilbert space $\dom(D^+_{B_0})$. 

We are now ready to prove the lemma. Set
\[ 
    P_s\ := \ \big(\hD^+_{B_s}U_s\big)^\dagger\hD^+_{B_s}U_s\ = \ 
   U_s^{-1}\Big(1+\hD^-_{B_s^\ad}\hD^+_{B_s}\Big)^{-1}\hD^-_{B_s^\ad} \hD^+_{B_s}U_s.
\]
It follows from \eqref{E:dimkerlong} that it suffices to show that for every $s_0\in[0,1]$ there exists $\epsilon>0$ such that $\dim\ker P_s$ is independent of $s$ modulo 2 for $s\in (s_0-\epsilon,s_0+\epsilon)$.

Since $D_s^*D_s$ is an elliptic operator of positive order and the boundary conditions $B_s$ and $B_s^\ad$ are elliptic, the spectrum of the operator $\hD^-_{B_s^\ad} \hD^+_{B_s}$ is discrete. Therefore,  the intersection of the spectrum of the operator $\Big(1+\hD^-_{B_s^\ad}\hD^+_{B_s}\Big)^{-1}\hD^-_{B_s^\ad} \hD^+_{B_s}$ and, hence, of $P_s$, 
with the interval $[0,1)$ is discrete. Hence, 
for $s_0\in[0,1]$, there exists $\epsilon, \delta>0$ such that $\delta<1$ is not in the spectrum of $P_s$ for all $s\in (s_0-\epsilon,s_0+\epsilon)$ and the intersection of the spectrum of $P_{s_0}$ with the interval $[0,\delta]$ is equal to $\{0\}$. Then 
for all $s\in (s_0-\epsilon,s_0+\epsilon)$
\[
    \dim \ker P_{s_0} \ = \
    \sum_{0\le\lambda<\delta} \dim E_{s,\lambda}   ,
\]
where $ E_{s,\lambda}$ is the eigenspace of $P_s$ with eigenvalue $
\lambda$. 

Hence, to prove the lemma, it is enough to show that the dimension of $E_{s,\lambda}$ is even for all $0<\lambda<\delta$.

Set $\tilde{E}_{s,\lambda}:= U_s E_{s,\lambda}$. Then, for all $\phi\in \tilde{E}_{s,\lambda}$,
\[
    \Big(1+\hD^-_{B_s^\ad}\hD^+_{B_s}\Big)^{-1}\hD^-_{B_s^\ad} \hD^+_{B_s}\,\phi\ = \ \lambda\phi.
\]
It follows that 
\[
    \hD^-_{B_s^\ad} \hD^+_{B_s}\,\phi\ = \ \frac{\lambda}{1-\lambda}\,\phi.
\]
Hence, $\tilde{E}_{s,\lambda}$ is an eigenspace of $\hD^-_{B_s^\ad} \hD^+_{B_s}$. To finish the proof of the lemma we need to show that 
\[
    \dim \tilde{E}_{s,\lambda}\ = \ \dim E_{s,\lambda}
\]
is even. We proceed as in the proof of Theorem~2.5 of \cite{BrSaeedi24index}. Set $\mu= \frac{\lambda}{1-\lambda}$.

Recall that $\tau \hD^+_{B_s}\tau^{-1}=\hD^-_{B_s^\ad}$ and $\tau^{-1}=-\tau$. Hence, 
\[
    \hD^-_{B_s^\ad} \hD^+_{B_s}\,\tau \hD^+_{B_s}\ = \ \tau \hD^+_{B_s}\,  \hD^-_{B_s^\ad}\hD^+_{B_s}.
\]
It follows that $\tau \hD^+_{B_s}(\tilde{E}_{s,\lambda})\subset \tilde{E}_{s,\lambda}$.  For $\lambda>0$  consider the operator 
\[
    \calE_{s,\lambda}:= \frac1{\sqrt{\mu}}\tau \hD^+_{B_s}:\,\tilde{E}_{s,\lambda}\ \to \tilde{E}_{s,\lambda}.
\]
Then 
\[
    \calE_{s,\lambda}^2 \ = \  \frac1\mu\, \tau \hD^+_{B_s} \tau \hD^+_{B_s}
    \ = \ - \frac1\mu\tau \hD^+_{B_s} \tau^{-1} \hD^+_{B_s}
    \ = \ - \frac1\mu \hD^-_{B_s^\ad}\hD^+_{B_s} \ = \ -\ID:\,\tilde{E}_{s,\lambda}\ \to \tilde{E}_{s,\lambda}.
\]
Thus $\calE_{s,\lambda}:\, \tilde{E}_{s,\lambda}\ \to \tilde{E}_{s,\lambda}$ is an anti-unitary anti-involution. From Kramers' degeneracy (cf., for example,  Lemma~2.2 of \cite{BrSaeedi24index}) we conclude now that the dimension of $\tilde{E}_{s,\lambda}$ is even for all $\lambda\in(0,\delta)$.
\end{proof}

\subsection{Proof of Theorem~\ref{T:splittingtau}}
\label{SS:prsplittingtau}
Theorem~\ref{T:splittingtau} follows now from \eqref{E:indDK=indD}, as explained in the end of Section~\ref{SS:family of operators}.
\hfill$\square$

\section{The case of a separating hypersurface and a $\tau$-index theorem}\label{S:tauindextheorem}

As in Section~\ref{S:splitttingtauindex} we assume that $(E,\theta^E)$ is a quaternionic Dirac bundle over a closed involutive manifold $(M,\theta^M)$ and that the corresponding Dirac-type operator $D$ is odd symmetric.

\subsection{A separating hypersurface}\label{S:deparating}
We assume there exists a $\theta^M$-invarint hypersurface $N\subset M$ which divides $M$ in two, i.e, that 
\[
    M\ = \ M_1\sqcup_N M_2,
\]
where $M_1$ and $M_2$ are compact manifolds whose boundary is identified with $N$ such that $M_1\cap M_2= N$. We also assume that $\theta^M(M_1)= M_2$. Note that if the involution $\theta^M$ does not have fixed points such a hypersurface always exists and is called a {\em characteristic submanifold}, cf. \cite[\S I.2.1]{Medrano71boookinvolutions}.

As in Section~\ref{S:splitttingtauindex}, we identify a neighborhood of $N$ with the cylinder $(-r,r)\times N$ so that $\theta^M(t,y)= \big(-t,\theta^M(y)\big)$  ($t\in (-r,r), \ y\in N$). Then, by \eqref{E:tauct}, $\tau$ anti-commutes with $c(dt)$.

\subsection{The splitting theorem in case of a separating hypersurface}\label{SS:splittingseparating}
The manifold $\hM$  introduced in Section~\ref{SS:splittingthm} now becomes a disjoint union
\[
    \hM\ = \ M_1 \sqcup M_2.
\]

Let $A$ be an adapted operator for $D$ on $N$ which satisfies condition \eqref{E:tauA} of Lemma~\ref{L:tausymmetricA}.  Let $D_1$ and $D_2$ denote the restrictions of $D$ to $M_1$ and $M_2$ respectively. Note that they are also equal to the restrictions of the operators $\hD$ to these manifolds. Using the notation of Section~\ref{SS:cuttingN}, we have 
\[
    \hD^+_{\hat{B}(\hA)}\ = \ 
    (D_1^+)_{B(A_1)}\oplus  (D_2^+)_{\oB(A_2)}.
\]
Thus the Splitting Theorem~\ref{T:splittingtau}, becomes
\begin{equation}\label{E:splittingM1M2}
    \ind_\tau D^+\ = \ 
    \ind_\tau (D_1^+)_{B(A_1)}\ + \ 
    \ind_\tau (D_2^+)_{\oB(A_2)}.
\end{equation}

Next, we show that this equality implies that $\ind_\tau D^+$ is equal to the mod 2 reduction of the usual index of the APS boundary value problem for operator $D_j$ ($j=1,2$). Recall, that $(D_j)^+_{APS}$
is by definition equal to $(D_j)^+_{B(A_j)}$, cf. \eqref{E:APSbc}.
\begin{theorem}\label{T:indtau=indA1}
In the situation described above
\begin{equation}\label{E:indtau=indA1}
        \ind_\tau D^+\ \equiv \ \ind (D_1)^+_{APS} \ \equiv \ \ind (D_2)^+_{APS}
\end{equation}
where $\equiv$ denotes equality modulo 2.
\end{theorem}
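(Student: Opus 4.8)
The plan is to deduce Theorem~\ref{T:indtau=indA1} from the Splitting Theorem~\ref{T:splittingtau} in the separating case, i.e.\ from \eqref{E:splittingM1M2}, by relating the two summands on the right-hand side via the involution $\tau$. Concretely, I would first recall from \eqref{E:tauA1A2} that $\tau A_1\tau^{-1}=A_2$, which together with \eqref{E:tauct} (i.e.\ $\tau c(dt)\tau^{-1}=-c(dt)$) and \eqref{E:tauhD} shows that $\tau$ carries the domain of $(D_1^+)_{B(A_1)}$ into the domain of the adjoint boundary value problem on $M_2$. By the identification of adjoint boundary conditions in \eqref{E:adjointAPS}, $\big(B(A_2)\big)^{\ad}=\oB(A_1)$, so the adjoint of $(D_2^+)_{\oB(A_2)}$ is $(D_2^-)_{B(A_2)}$; chasing through \eqref{E:Bad} one gets that $\tau$ maps $\ker (D_1^+)_{B(A_1)}$ isomorphically (anti-linearly) onto $\coker (D_2^+)_{\oB(A_2)}= \ker (D_2^-)_{B(A_2)}$, and symmetrically $\tau$ maps $\ker (D_2^+)_{\oB(A_2)}$ onto $\coker (D_1^+)_{B(A_1)}=\ker (D_1^-)_{\oB(A_1)}$.

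From these two anti-linear isomorphisms I would extract the key numerical identity. Writing $k_j:=\dim\ker(D_j^+)$ and $c_j:=\dim\coker(D_j^+)$ for the relevant boundary value problems ($j=1$ with APS conditions, $j=2$ with dual APS conditions), the observation above gives $k_1=c_2$ and $k_2=c_1$. Hence
\begin{equation}\label{E:indsumisindAPS}
  \ind (D_1^+)_{B(A_1)} + \ind (D_2^+)_{\oB(A_2)}
  \;=\; (k_1-c_1)+(k_2-c_2) \;=\; 0,
\end{equation}
so $\ind (D_2^+)_{\oB(A_2)}=-\ind (D_1^+)_{B(A_1)}$. On the other hand, the left-hand side of \eqref{E:splittingM1M2} is $\ind_\tau D^+$, computed mod $2$, and each summand on the right is a $\ZZ_2$-reduction; combining with \eqref{E:indsumisindAPS} and the fact that $-1\equiv 1 \pmod 2$ yields
\begin{equation}\label{E:tauindtwocopies}
  \ind_\tau D^+ \;\equiv\; \ind_\tau (D_1^+)_{B(A_1)} + \ind_\tau (D_2^+)_{\oB(A_2)}
  \;\equiv\; 2\,\ind (D_1^+)_{APS}\pmod 2,
\end{equation}
which is not immediately what we want, so the argument has to be slightly more careful: rather than reducing each summand separately, I would use \eqref{E:splittingM1M2} directly as $\ind_\tau D^+ \equiv \dim\ker(D_1^+)_{B(A_1)} + \dim\ker(D_2^+)_{\oB(A_2)} \equiv k_1+k_2 \pmod 2$, then substitute $k_2=c_1$ to get $\ind_\tau D^+\equiv k_1+c_1 \equiv (k_1-c_1) = \ind (D_1)^+_{APS}\pmod 2$ since $k_1+c_1\equiv k_1-c_1$. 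The identical computation with the roles of $M_1$ and $M_2$ exchanged (using $k_1=c_2$) gives $\ind_\tau D^+\equiv \ind (D_2)^+_{APS}\pmod 2$, establishing \eqref{E:indtau=indA1}.

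The main obstacle, and the step requiring the most care, is the precise verification that $\tau$ intertwines $\ker$ of the boundary value problem on $M_1$ with $\coker$ of the one on $M_2$ — i.e.\ that the boundary conditions match up correctly under $\tau$. One must track that $\tau$ sends the APS condition $B(A_1)=H^{1/2}_{(-\infty,0)}(A_1)$ on $N_1$ to the dual APS condition $\oB(A_2)=H^{1/2}_{(-\infty,0]}(A_2)=H^{1/2}_{[0,\infty)}(A_1)^{\perp\text{-ish}}$ on $N_2$ in exactly the way that makes $\tau$ an anti-linear isomorphism between the relevant kernels, using $\tau A_1\tau^{-1}=A_2$ (so $\tau$ sends the $(-\infty,0)$-eigenspectrum of $A_1$ to the $(0,\infty)$-eigenspectrum of $A_2$, after accounting for the sign), and \eqref{E:adjointAPS}. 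Once this intertwining is set up cleanly, the rest is the elementary mod-$2$ bookkeeping in \eqref{E:tauindtwocopies} and the paragraph following it; I would present the kernel/cokernel identification as a short lemma or inline claim before doing the arithmetic.
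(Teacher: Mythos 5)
Your proposal is correct and follows essentially the same route as the paper: in both, the key point is that the odd symmetry of $\hD^+_{\hat{B}(\hA)}$ makes $\tau$ an anti-linear isomorphism between $\ker(D_1^+)_{B(A_1)}$ and $\coker(D_2^+)_{\oB(A_2)}$ (and symmetrically), after which the result is the same modulo-$2$ arithmetic $k_1+k_2\equiv k_1+c_1\equiv k_1-c_1=\ind(D_1)^+_{APS}$ starting from \eqref{E:splittingM1M2}. The intermediate detour through the observation that the two $\ZZ$-valued indices cancel (your equations before the phrase ``has to be slightly more careful'') is a dead end you correctly abandon, and the final computation matches the paper's argument.
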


\begin{proof}
By \eqref{E:Bad}, the adjoint operator to $(D_2)^+_{\oB(A_2)}$ is $(D_1)^-_{B(A_1)}$. Hence, 
\begin{equation}\label{E:indtau=coker}
    \ind_\tau (D_2)^+_{\oB(A_2)}\ \equiv \ \dim\ker (D_2)^+_{\oB(A_2)}\ = \ \dim\coker (D_1)^-_{B(A_1)}.
\end{equation}
Since $\ind_\tau (D_1)^+_{B(A_1)} = \dim\ker(D_1)^+_{B(A_1)}$ we conclude from \eqref{E:splittingM1M2} and \eqref{E:indtau=coker} that
\begin{multline}\label{E:indD=indD1}
    \ind_\tau D^+\ \equiv \ 
   \dim\ker(D_1)^+_{B(A_1)}\ + \ 
    \dim\coker (D_1)^-_{B(A_1)} 
    \\ \equiv \ 
    \dim\ker(D_1)^+_{B(A_1)}\ - \ \dim\coker (D_1)^-_{B(A_1)}
    \ = \ \ind (D_1)^+_{B(A_1)}.
\end{multline}
This proves the first equality in \eqref{E:indtau=indA1}.

The second equality in \eqref{E:indtau=indA1} follows from the fact that the choice of what submanifold was labeled $M_1$ and what was labeled $M_2$ was completely random. Hence, \eqref{E:indD=indD1} also implies that $\ind_\tau D^+\ \equiv \ \ind (D_2)^+_{B(A_2)}$.
\end{proof}

\subsection{A $\tau$-index theorem in the product case}\label{SS:tauindextheorem}
We now combine Theorem~\ref{T:indtau=indA1} and the APS index formula \eqref{E:APStheorem} to express the $\tau$-index of $D^+$ as an integral of a differential form in the case when all the structures are products near the boundary. In particular, we assume that near $N$ the Dirac operator $D$ is given by \eqref{E:productD}.

From \eqref{E:tauA}, we conclude that 
\[
        \eta(A)\ = \ \eta(\tau A\tau^{-1})\ = \ -\,\eta(A).
\]
Hence, 
\begin{equation}\label{E:eta=0}
    \eta(A)\ = \ 0.
\end{equation}

Combining Theorem~\ref{T:indtau=indA1}, the APS index formula \eqref{E:APStheorem}, and \eqref{E:eta=0} we obtain the following 

\begin{theorem}\label{T:tauindex}
Let $D$ be an odd symmetric Dirac-type operator on a graded quaternionic Dirac bundle $(E=E^+\oplus E^-,\theta^E)$ over a closed involutive manifold $(M,\theta^M)$. Suppose $N\subset M$ is a $\theta^M$-invariant hypersurface, which divides $M$ into two components: $M= M_1\sqcup_N M_2$ and all the structures are product near $N$. Then 
\begin{multline}\label{E:tauindextheorem}
        \ind_\tau D^+\ \equiv \ 
     \int_{M_1}\, \hat{A}(TM)\,\ch(E/S) \ - \frac{\dim\ker A^+}2
     \\ \equiv \ \int_{M_2}\, \hat{A}(TM)\,\ch(E/S)
      \ - \frac{\dim\ker A^+}2,
\end{multline}
where $\equiv$ denotes the equality modulo 2. 
\end{theorem}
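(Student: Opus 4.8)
The plan is to simply chain together the two ingredients that have already been assembled in the excerpt. Theorem~\ref{T:indtau=indA1} gives, for $j=1,2$,
\[
	\ind_\tau D^+\ \equiv\ \ind (D_j)^+_{APS}\pmod 2,
\]
and since we are now in the product case near $N$, the Atiyah--Patodi--Singer formula \eqref{E:APStheorem} applies verbatim to $(D_j)^+_{APS}$, the restriction of $D$ to $M_j$ with its boundary $N$ (equipped with the adapted operator $A_j$). So the first step is to write down \eqref{E:APStheorem} for $M_1$:
\[
	\ind (D_1)^+_{APS}\ =\ \int_{M_1}\hat A(TM)\,\ch(E/S)\ -\ \frac{\eta(A_1)+\dim\ker A_1^+}{2},
\]
and likewise for $M_2$ with $A_2$ in place of $A_1$.

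The second step is to observe that the eta-invariant contribution vanishes. We chose $A=A_1$ to satisfy \eqref{E:tauA}, i.e. $\tau A\tau^{-1}=-A$; since $\tau$ is anti-unitary it carries the spectral data of $A$ to that of $-A$, so the eta function of $A$ equals that of $-A$, forcing $\eta(A)=0$, which is exactly \eqref{E:eta=0}. The same reasoning applies to $A_2=-A$, whose eta-invariant is $-\eta(A)=0$ as well; alternatively $\eta(A_2)=\eta(-A)=-\eta(A)=0$ directly. Next I would note that $\dim\ker A_1^+=\dim\ker A^+$, and $\dim\ker A_2^+=\dim\ker(-A)^+=\dim\ker A^+$, so the two boundary contributions coincide and each equals $\tfrac12\dim\ker A^+$. (One should also recall that by Kramers' degeneracy this integer is even, but that observation is not needed for the statement as formulated; it is only needed for the integrality remark \eqref{E:Iintegerintegral} in the introduction.)

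Substituting these two facts into the APS formula for $M_1$ yields
\[
	\ind (D_1)^+_{APS}\ =\ \int_{M_1}\hat A(TM)\,\ch(E/S)\ -\ \frac{\dim\ker A^+}{2},
\]
and combining with $\ind_\tau D^+\equiv\ind (D_1)^+_{APS}$ gives the first line of \eqref{E:tauindextheorem}; the identical computation on $M_2$, using $\ind_\tau D^+\equiv\ind(D_2)^+_{APS}$, gives the second line. There is essentially no obstacle here: the only point requiring a word of care is checking that $A_j$ is a legitimate adapted operator for $D_j$ at the boundary $N_j$ of $M_j$ in the product sense, so that \eqref{E:APStheorem} is applicable as stated — but this was already recorded in Section~\ref{SS:splittingthm} (equations \eqref{E:adaptedN}--\eqref{E:productD12}), where $A_1=A$ and $A_2=-A$ were identified as the adapted operators for $N_1$ and $N_2$, and the product hypothesis near $N$ is part of the theorem's assumptions. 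So the proof is a two-line assembly, and I would write it as such.
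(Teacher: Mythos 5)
Your proposal reproduces the paper's proof almost word for word: both proceed by (i) citing Theorem~\ref{T:indtau=indA1} for the mod-$2$ identification $\ind_\tau D^+\equiv \ind(D_j)^+_{APS}$, (ii) invoking the APS formula \eqref{E:APStheorem} for the restrictions $D_j$ on $M_j$, and (iii) killing the eta-term by the symmetry $\tau A\tau^{-1}=-A$ exactly as in \eqref{E:eta=0}, together with the observation $\dim\ker A_1^+=\dim\ker A_2^+=\dim\ker A^+$. So there is nothing to distinguish between your route and the paper's; the argument is the intended two-line assembly. One small point worth being scrupulous about (the paper itself glosses over it): the eta-invariant and kernel dimension that enter the APS formula are those of $A^+$, not of the full $A=A^+\oplus A^-$, and since $\tau$ exchanges $E^\pm_N$ the relation $\tau A\tau^{-1}=-A$ literally yields $\tau A^+\tau^{-1}=-A^-$, hence $\eta(A^+)=-\eta(A^-)$, which is also what $c(dt)$-conjugation gives; if you want to conclude $\eta(A^+)=0$ you should say explicitly which anti-unitary symmetry of $A^+$ itself reverses its spectrum, rather than appealing to the identity for the full $A$. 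This is not a divergence from the paper — it is the same step, stated the same way — but it is the one place where a careful referee would ask for a clarifying sentence.
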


By \eqref{E:tauct}, the product $\sigma= \tau{}c(dt)$ is an anti-linear anti-involution. From \eqref{E:[A,c]} and \eqref{E:tauA} it follows that this anti-involution commutes with $A$ and preserves the grading: $\sigma A^+\sigma^{-1}= A^+$. Hence, $\sigma$ acts on $\ker A$. By Kramer's degeneracy,  cf. \cite{KleinMartin1952}  or \cite[Lemma~2.2]{BrSaeedi24index}, the dimension of $\ker A^+$ is even. Thus 
we obtain the following corollary of Theorem~\ref{T:indtau=indA1}

\begin{corollary}\label{C:integerintegral}
Under the conditions of Theorem~\ref{T:indtau=indA1},
\begin{equation}\label{E:integerintegral}
    \int_{M_j}\, \hat{A}(TM)\,\ch(E/S)\ \in \ \ZZ, 
    \qquad j=1,2.
\end{equation}

\end{corollary}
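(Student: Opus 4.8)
The plan is to derive Corollary~\ref{C:integerintegral} directly from Theorem~\ref{T:indtau=indA1} together with the standard Atiyah--Patodi--Singer index theorem \eqref{E:APStheorem}, by exploiting the vanishing of the $\eta$-invariant and the evenness of $\dim\ker A^+$. Since the integrality is a statement about a single integral of smooth differential forms over $M_j$, the first reduction is that the value of $\int_{M_j}\hat A(TM)\,\ch(E/S)$ depends only on the local geometry in the interior and near $N$, so there is no loss in assuming all the structures are product near $N$: one may deform the metric, the Clifford multiplication, and the connection in a collar of $N$ to product form without changing the de Rham cohomology class of the relevant characteristic form (indeed without changing its pointwise value away from the collar), and the corresponding change in $\int_{M_j}\hat A\,\ch(E/S)$ is the integral over the collar of an exact form that integrates to something which is absorbed into the standard transgression argument; alternatively, one simply invokes that $\hat A(TM)\,\ch(E/S)$ depends on a choice of connection only up to an exact form and the boundary term it produces is accounted for by Grubb's non-product APS formula cited in the remark after \eqref{E:APStheorem}. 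I would, however, keep it simple and just reduce to the product case, which is exactly the hypothesis of Theorem~\ref{T:tauindex}.

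Once in the product case, the key steps are: (i) apply the APS index theorem \eqref{E:APStheorem} to $D_j$ on $M_j$, giving
\[
  \ind (D_j)^+_{APS}\ = \ \int_{M_j}\hat A(TM)\,\ch(E/S)\ -\ \frac{\eta(A_j)+\dim\ker A_j}{2};
\]
(ii) observe that by Lemma~\ref{L:tausymmetricA} the adapted operator $A$ can be chosen with $\tau A\tau^{-1}=-A$, and since conjugation by the anti-unitary $\tau$ preserves the spectrum while the displayed relation reverses its sign, the spectrum of $A$ is symmetric about $0$, hence $\eta(A)=0$ as in \eqref{E:eta=0}; (iii) note $A_j=\pm A$, so $\eta(A_j)=0$ and $\dim\ker A_j=\dim\ker A$; (iv) invoke Kramers' degeneracy in the form already used in the paper — the anti-involution $\sigma=\tau\,c(dt)$ commutes with $A^+$ (this follows from \eqref{E:[A,c]} and \eqref{E:tauA}) and hence acts on $\ker A^+$, forcing $\dim\ker A^+$ to be even; (v) conclude that $\frac{\dim\ker A}{2}=\dim\ker A^+$ (using that $c(dt)$ interchanges $\ker A^+$ and $\ker A^-$, so $\dim\ker A=2\dim\ker A^+$) is an \emph{integer}, so the right-hand side of the APS formula exhibits $\int_{M_j}\hat A(TM)\,\ch(E/S)$ as the sum of the integer $\ind(D_j)^+_{APS}$ and the integer $\dim\ker A^+$. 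Therefore the integral is an integer.

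Concretely, from \eqref{E:APStheorem}, $\eta(A_j)=0$, and $\dim\ker A_j=2\dim\ker A^+$, we get
\[
  \int_{M_j}\hat A(TM)\,\ch(E/S)\ = \ \ind (D_j)^+_{APS}\ +\ \dim\ker A^+\ \in\ \ZZ,
\]
since both summands on the right are integers. (Equivalently, one reads this off the first line of \eqref{E:tauindextheorem}: the left-hand side $\ind_\tau D^+$ is by definition an element of $\ZZ_2$, i.e. represented by an integer, so $\int_{M_j}\hat A(TM)\,\ch(E/S)-\frac{\dim\ker A^+}{2}$ differs from an integer by an even integer; but $\frac{\dim\ker A^+}{2}$ need not be an integer, so this route actually only shows $\int_{M_j}\hat A\,\ch(E/S)-\frac{\dim\ker A^+}{2}\in\ZZ$ — weaker than claimed — which is why the cleaner argument goes through the \emph{integer-valued} APS index $\ind(D_j)^+_{APS}$ rather than through its mod-$2$ reduction.)

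The only genuinely delicate point is step (v): one must be careful that the quantity added to $\int_{M_j}\hat A\,\ch(E/S)$ to produce the integer $\ind(D_j)^+_{APS}$ is itself an integer, i.e. that $\dim\ker A$ is \emph{even}, not merely that $\frac{\eta(A)+\dim\ker A}{2}$ is an integer. This is precisely where Kramers' degeneracy for $A^+$ — via the anti-involution $\sigma=\tau\,c(dt)$ restricted to $\ker A^+$, as spelled out just before the statement of the corollary — is indispensable; without it one would only recover the weaker (and for the mod-$2$ index, vacuous) assertion that the integral lies in $\tfrac12\ZZ$. I expect writing out the commutation $\sigma A^+\sigma^{-1}=A^+$ from \eqref{E:[A,c]} and \eqref{E:tauA}, and checking $\sigma^2=-1$ (using $(\tau c(dt))^2 = \tau c(dt)\tau c(dt) = -\tau^2 c(dt)^2 = -(-1)(-1) = -1$, with $c(dt)^2=-1$ since $dt$ is a unit covector and $c$ is skew-adjoint Clifford multiplication), to be the small routine verification underlying the corollary; everything else is a direct substitution into \eqref{E:APStheorem}.
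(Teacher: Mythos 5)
Your ``concrete'' derivation and your closing discussion contradict each other, and the tension traces back to an ambiguity in \eqref{E:APStheorem} that you should have flagged rather than absorbed.

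In steps (i)--(v) you read \eqref{E:APStheorem} literally, with $A$ the full adapted operator on $E_N=E_N^+\oplus E_N^-$. Then $\eta(A)=0$ by \eqref{E:eta=0}, and $\dim\ker A=2\dim\ker A^+$ because $c(dt)$ intertwines $A^+$ and $-A^-$. Substituting, you get
\[
  \int_{M_j}\hat A(TM)\,\ch(E/S)\ =\ \ind(D_j)^+_{APS}\ +\ \dim\ker A^+,
\]
which is an integer for trivial reasons: $\dim\ker A^+$ is a dimension. Nowhere in this chain is Kramers' degeneracy used. Yet in the final paragraph you assert that Kramers' degeneracy is ``indispensable'' to make $\dim\ker A$ even. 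That claim is false for your own argument: $\dim\ker A=2\dim\ker A^+$ is even automatically via $c(dt)$, which is an elementary fact unrelated to the quaternionic structure $\sigma=\tau c(dt)$ on $\ker A^+$. You have conflated two distinct statements: ``$\dim\ker A$ is even'' (which follows from $c(dt)$) and ``$\dim\ker A^+$ is even'' (which is Kramers' degeneracy, via $\sigma$).

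The paper's own argument is the one that genuinely needs Kramers, and it is not the one you wrote down. Theorem~\ref{T:tauindex} has the boundary correction $\frac{\dim\ker A^+}{2}$ --- not $\dim\ker A^+$ --- which matches the standard APS index theorem stated in terms of $A^+$ (the tangential operator on $E^+_{\p M}$), not the full $A$ of \eqref{E:APStheorem} as literally written. With that normalization the relevant identity is $\int_{M_j}\hat A\,\ch(E/S)=\ind(D_j)^+_{APS}+\frac{\dim\ker A^+}{2}$, and integrality of the last term is exactly what Kramers' degeneracy supplies via $\sigma^2=-1$, $\sigma A^+\sigma^{-1}=A^+$. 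This is what the paragraph preceding the corollary proves. Your proposal arrives at a correct-looking conclusion by a route that is off by a factor of $2$ relative to the formula the paper actually uses, and then appends a justification (Kramers) that your route does not invoke. You should commit to one convention for the APS tangential operator; if you use the full $A$, say so and drop the Kramers remark, and if you use $A^+$ (as Theorem~\ref{T:tauindex} does), recompute step (v) so that Kramers really is where it enters.

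Two smaller points. First, the paper's proof is situated in the product case (Theorem~\ref{T:tauindex}); the corollary's hypothesis ``under the conditions of Theorem~\ref{T:indtau=indA1}'' should be read as inheriting the product-near-$N$ assumption, and your opening reduction to the product case, while a reasonable instinct, is gestured at rather than argued --- the paper does not attempt it and you should either omit it or carry it out. Second, the vanishing $\eta(A_j)=0$ which you and the paper both use should be checked for the object that actually appears in the APS formula you settle on; $\tau A\tau^{-1}=-A$ flips the spectrum of the full $A$, but $\sigma A^+\sigma^{-1}=A^+$ does not flip the spectrum of $A^+$, so these are not interchangeable assertions.
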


\begin{bibdiv}
\begin{biblist}

\bib{Adams-SobolevSpaces}{book}{
      author={Adams, R.~A.},
       title={Sobolev spaces},
   publisher={Academic Press [A subsidiary of Harcourt Brace Jovanovich,
  Publishers], New York-London},
        date={1975},
        note={Pure and Applied Mathematics, Vol. 65},
}

\bib{APS1}{article}{
      author={Atiyah, M.~F.},
      author={Patodi, V.~K.},
      author={Singer, I.~M.},
       title={Spectral asymmetry and {R}iemannian geometry. {I}},
        date={1975},
     journal={Math. Proc. Cambridge Philos. Soc.},
      volume={77},
      number={1},
       pages={43\ndash 69},
}

\bib{AtSinger69}{article}{
      author={Atiyah, M.~F.},
      author={Singer, I.~M.},
       title={Index theory for skew-adjoint {F}redholm operators},
        date={1969},
        ISSN={0073-8301},
     journal={Inst. Hautes \'Etudes Sci. Publ. Math.},
      number={37},
       pages={5\ndash 26},
         url={http://www.numdam.org/item?id=PMIHES_1969__37__5_0},
      review={\MR{0285033}},
}

\bib{BarBallmann12}{incollection}{
      author={B\"ar, C.},
      author={Ballmann, W.},
       title={Boundary value problems for elliptic differential operators of
  first order},
        date={2012},
   booktitle={Surveys in differential geometry. {V}ol. {XVII}},
      series={Surv. Differ. Geom.},
      volume={17},
   publisher={Int. Press, Boston, MA},
       pages={1\ndash 78},
         url={http://dx.doi.org/10.4310/SDG.2012.v17.n1.a1},
}

\bib{BeGeVe}{book}{
      author={Berline, N.},
      author={Getzler, E.},
      author={Vergne, M.},
       title={Heat kernels and {Dirac} operators},
   publisher={Springer-Verlag},
        date={1992},
}

\bib{BoosWoj93book}{book}{
      author={Boo{\ss}-Bavnbek, B.},
      author={Wojciechowski, K.~P.},
       title={Elliptic boundary problems for {D}irac operators},
      series={Mathematics: Theory \& Applications},
   publisher={Birkh\"auser Boston, Inc., Boston, MA},
        date={1993},
        ISBN={0-8176-3681-1},
         url={http://dx.doi.org/10.1007/978-1-4612-0337-7},
}

\bib{Br19Toeplitz}{article}{
      author={Braverman, Maxim},
       title={Spectral flows of {T}oeplitz operators and bulk-edge
  correspondence},
        date={2019},
        ISSN={1573-0530},
     journal={Letters in Mathematical Physics},
      volume={109},
      number={10},
       pages={2271\ndash 2289},
         url={https://doi.org/10.1007/s11005-019-01187-7},
}

\bib{BrSaeedi24deformation}{article}{
      author={Braverman, Maxim},
      author={Haj Saeedi~Sadegh, Ahmad~Reza},
       title={Deformation spaces, rescaled bundles, and the generalized
  {K}irillov formula},
        date={2024},
        ISSN={1661-6952,1661-6960},
     journal={J. Noncommut. Geom.},
      volume={18},
      number={4},
       pages={1485\ndash 1519},
         url={https://doi.org/10.4171/jncg/561},
      review={\MR{4793460}},
}

\bib{BrMaschler19}{article}{
      author={Braverman, Maxim},
      author={Maschler, Gideon},
       title={Equivariant {A}{P}{S} index for {D}irac operators of non-product
  type near the boundary},
        date={2019},
        ISSN={0022-2518},
     journal={Indiana Univ. Math. J.},
      volume={68},
       pages={435\ndash 501},
}

\bib{BrSaeedi24spflow}{article}{
      author={Braverman, Maxim},
      author={Sadegh, Ahmad Reza Haj~Saeedi},
       title={The $\mathbb{Z}_2$-valued spectral flow of a symmetric family of
  {T}oeplitz operators},
        date={2024},
     journal={arXiv:2409.15534},
}

\bib{BrSaeedi24index}{article}{
      author={Braverman, Maxim},
      author={Sadegh, Ahmad Reza Haj~Saeedi},
       title={On the $\mathbb{Z}_2$-valued index of elliptic odd symmetric
  operators on non-compact manifolds},
        date={2024/12/26},
     journal={Annales math{\'e}matiques du Qu{\'e}bec, \ DOI:
  10.1007/s40316-024-00228-5},
       pages={arXiv:2403.13999},
         url={https://doi.org/10.1007/s40316-024-00228-5},
}

\bib{BrShi21odd}{article}{
      author={Braverman, Maxim},
      author={Shi, Pengshuai},
       title={The {A}tiyah-{P}atodi-{S}inger index on manifolds with
  non-compact boundary},
        date={2021},
        ISSN={1050-6926,1559-002X},
     journal={J. Geom. Anal.},
      volume={31},
      number={4},
       pages={3713\ndash 3763},
         url={https://doi.org/10.1007/s12220-020-00412-3},
      review={\MR{4236541}},
}

\bib{DeNittisGomi15}{article}{
      author={De~Nittis, Giuseppe},
      author={Gomi, Kiyonori},
       title={Classification of ``quaternionic" {B}loch-bundles: topological
  quantum systems of type {A}{I}{I}},
        date={2015},
        ISSN={0010-3616,1432-0916},
     journal={Comm. Math. Phys.},
      volume={339},
      number={1},
       pages={1\ndash 55},
         url={https://doi.org/10.1007/s00220-015-2390-0},
      review={\MR{3366050}},
}

\bib{DeNittisSB15}{article}{
      author={De~Nittis, Giuseppe},
      author={Schulz-Baldes, Hermann},
       title={Spectral flows of dilations of {F}redholm operators},
        date={2015},
        ISSN={0008-4395,1496-4287},
     journal={Canad. Math. Bull.},
      volume={58},
      number={1},
       pages={51\ndash 68},
         url={https://doi.org/10.4153/CMB-2014-055-3},
      review={\MR{3303207}},
}

\bib{DollSB21}{article}{
      author={Doll, Nora},
      author={Schulz-Baldes, Hermann},
       title={Skew localizer and {$\mathbb{Z}_2$}-flows for real index
  pairings},
        date={2021},
        ISSN={0001-8708,1090-2082},
     journal={Adv. Math.},
      volume={392},
       pages={Paper No. 108038, 42},
         url={https://doi.org/10.1016/j.aim.2021.108038},
      review={\MR{4322160}},
}

\bib{Dupont69}{article}{
      author={Dupont, Johan~L.},
       title={Symplectic bundles and {$KR$}-theory},
        date={1969},
        ISSN={0025-5521,1903-1807},
     journal={Math. Scand.},
      volume={24},
       pages={27\ndash 30},
         url={https://doi.org/10.7146/math.scand.a-10918},
      review={\MR{254839}},
}

\bib{Gilkey75}{article}{
      author={Gilkey, P.},
       title={The boundary integrand in the formula for the signature and
  {E}uler characteristic of a {R}iemannian manifold with boundary},
        date={1975},
        ISSN={0001-8708},
     journal={Advances in Math.},
      volume={15},
       pages={334\ndash 360},
      review={\MR{0368084}},
}

\bib{Gilkey93}{article}{
      author={Gilkey, P.},
       title={On the index of geometrical operators for {R}iemannian manifolds
  with boundary},
        date={1993},
        ISSN={0001-8708},
     journal={Adv. Math.},
      volume={102},
      number={2},
       pages={129\ndash 183},
         url={http://dx.doi.org/10.1006/aima.1993.1063},
      review={\MR{1252030}},
}

\bib{Gikey95book}{book}{
      author={Gilkey, P.~B.},
       title={Invariance theory, the heat equation, and the {A}tiyah-{S}inger
  index theorem},
     edition={Second},
      series={Studies in Advanced Mathematics},
   publisher={CRC Press, Boca Raton, FL},
        date={1995},
        ISBN={0-8493-7874-4},
      review={\MR{1396308}},
}

\bib{GrafPorta13}{article}{
      author={Graf, G.~M.},
      author={Porta, M.},
       title={Bulk-edge correspondence for two-dimensional topological
  insulators},
        date={2013},
        ISSN={0010-3616},
     journal={Comm. Math. Phys.},
      volume={324},
      number={3},
       pages={851\ndash 895},
         url={http://dx.doi.org/10.1007/s00220-013-1819-6},
      review={\MR{3123539}},
}

\bib{Grubb92}{article}{
      author={Grubb, G.},
       title={Heat operator trace expansions and index for general
  {A}tiyah-{P}atodi-{S}inger boundary problems},
        date={1992},
        ISSN={0360-5302},
     journal={Comm. Partial Differential Equations},
      volume={17},
      number={11-12},
       pages={2031\ndash 2077},
         url={http://dx.doi.org/10.1080/03605309208820913},
      review={\MR{1194749}},
}

\bib{Hayashi17}{article}{
      author={Hayashi, Shin},
       title={Bulk-edge correspondence and the cobordism invariance of the
  index},
        date={2017},
     journal={Reviews in Mathematical Physics},
      volume={29},
      number={10},
       pages={1750033},
      eprint={https://doi.org/10.1142/S0129055X17500337},
         url={https://doi.org/10.1142/S0129055X17500337},
}

\bib{KleinMartin1952}{article}{
      author={Klein, Martin~J.},
       title={On a degeneracy theorem of {K}ramers},
    language={eng},
        date={1952},
        ISSN={0002-9505},
     journal={American journal of physics},
      volume={20},
      number={2},
       pages={65\ndash 71},
}

\bib{LawMic89}{book}{
      author={Lawson, H.~B.},
      author={Michelsohn, M.-L.},
       title={Spin geometry},
   publisher={Princeton University Press},
     address={Princeton, New Jersey},
        date={1989},
}

\bib{Medrano71boookinvolutions}{book}{
      author={Lopez~de Medrano, Santiago},
       title={Involutions on manifolds},
     edition={1971},
   publisher={Springer Berlin Heidelberg},
        date={1971},
         url={https://doi.org/10.1007/978-3-642-65012-3},
}

\bib{Schulz-Baldes15}{article}{
      author={Schulz-Baldes, Hermann},
       title={{$\mathbb Z_2$}-indices and factorization properties of odd
  symmetric {F}redholm operators},
        date={2015},
        ISSN={1431-0635,1431-0643},
     journal={Doc. Math.},
      volume={20},
       pages={1481\ndash 1500},
         url={https://doi.org/10.3934/dcdsb.2015.20.1031},
}

\bib{ShubinPDObook}{book}{
      author={Shubin, M.~A.},
       title={Pseudodifferential operators and spectral theory},
   publisher={Springer Verlag},
     address={Berlin, New York},
        date={1987},
}

\end{biblist}
\end{bibdiv}

\end{document}